\newtheorem{thm}{Theorem}[section]
\newtheorem{cor}[thm]{Corollary}
\newtheorem{lem}[thm]{Lemma}
\newtheorem{conj}[thm]{Conjecture}
\theoremstyle{definition}
\newtheorem{defn}[thm]{Definition}
\newtheorem{exm}[thm]{Example}
\newtheorem{rem}[thm]{Remark}
\DeclareMathOperator{\girth}{girth}
\DeclareMathOperator{\hgt}{ht}
\DeclareMathOperator{\ass}{Ass}
\DeclareMathOperator{\lcm}{lcm}
\DeclareMathOperator{\pc}{\mathcal{PC}}
\DeclareMathOperator{\D}{\mathcal{D}}
\DeclareMathOperator{\G}{\mathcal{G}}
\DeclareMathOperator{\H1}{\mathcal{H}}
\DeclareMathOperator{\C}{\mathcal{C}}
\def\Z {\mathbb Z}
\def\N {\mathbb N}
\begin{document}

\title[Cohen-Macaulay oriented graphs with large girth] {Cohen-Macaulay oriented graphs with large girth}

\author[L.X. Dung]{Le Xuan Dung}
\address{Faculty of Natural Sciences, Hong Duc University,
No. 565 Quang Trung Street, Dong Ve Ward, Thanh Hoa, Vietnam}
\email{lxdung27@gmail.com}

\author[T.N. Trung]{ Tran Nam Trung}
\address{Institute of Mathematics, VAST, 18 Hoang Quoc Viet, Hanoi, Viet Nam.}
\email{tntrung@math.ac.vn}

\subjclass[2010]{13D02, 05C90, 05E40.}
\keywords{Edge ideals,  Cohen-Macaulay,  Well-covered,  Oriented graphs}
\date{}
\commby{}
\begin{abstract}
We classify the Cohen-Macaulay weighted oriented graphs whose underlying graphs have girth at least $5$. 
\end{abstract}
\maketitle
\section*{Introduction}

Throughout the paper let $R = K[x_1,\ldots,x_d]$ be a polynomial ring over a given field $K$. Let $\D$ be a digraph with the vertex set $V(\D)=\{x_1,\ldots,x_d\}$ and the edge set $E(\D)$. Let  a function $\omega \colon V(\D) \to \Z_{>0}$ which we call a {\it weight vertex} on $\D$.  The couple $(\D,\omega)$ is called  a {\it weighted oriented graph} (or oriented graph for short) and simply write as $\D$ if there is no confusion.  The edge ideal of $\D$ is defined by
$$I(\D) = (x_ix_j^{\omega(x_j)} \mid (x_i,x_j)\in E(\D)).$$
This ideal has an interesting connection with the Reed-Muller codes in the coding theory (see e.g. \cite{HLMRV, MPV}).

Recently, the algebraic properties and invariants of these ideals are studied by many authors (see e.g. \cite{CK, GMSVV, GMV, HLMRV, KN, MP, MPV, PRT, PRV,S, SS, SSTY,ZWT}). In this paper we study the Cohen-Macaulayness of these ideals. An oriented graph $\D$ is called {\it Cohen-Macaulay} (respectively, {\it unmixed}) if the quotient ring $R/I(\D)$ is Cohen-Macaulay (respectively, unmixed). The problem on classifying Cohen-Macaulay oriented graphs are done for forests by \cite{GMSVV}, for chordal graphs by \cite{S}, for bipartite graphs by \cite{HLMRV}, or more general, for very well-covered graphs by \cite{PRV}. In this paper, we enlarge the class of Cohen-Macaulay oriented graphs by classifying completely them when their underlying graphs have girth at least $5$.  

Before stating the the result, we recall some definitions and terminologies. Let $G$ be a simple undirected graph. The {\it girth} of $G$, denoted by $\girth (G)$, is the length of any shortest cycle in $G$ or in the case $G$ is a forest we consider the girth to be infinite. An edge, in a graph $G$, incident with a leaf is called the {\it pendant edge}. A $5$-cycle $C_5$, i.e. a cycle of lenght $5$, of  $G$ is called {\it basic}, if $C_5$ does not contain two adjacent vertices of degree three or more in $G$.  In a given graph $G$, let $C(G)$ denote the set of all vertices which belong to basic $5$-cycles and let $P(G)$ denote the set of vertices  which are incident with {\it pendant} edges in $G$. Then, $G$ is in the class $\mathcal{PC}$ if $V(G)$ can be partitioned into $V(G)=P(G) \cup C(G)$ and the pendant edges form a perfect matching of $G[P(G)]$ (see \cite{FHN}).  

\medskip

The main result of the paper is the following theorem.

\medskip

{\noindent {\bf Theorem \ref{main-theorem}}.  \it Let $\D$ be a weighted oriented graph with underlying graph $G$ of girth at least $5$. Then, the following conditions are equivalent:
\begin{enumerate}
\item $\D$ is Cohen-Macaulay.
\item $G$ is Cohen-Macaulay and $\D$ is unmixed.
\item $G$ is in the class $\pc$ and $\D$ satisfies:
\begin{enumerate}
\item For every pendant edge $xy$ of $G$ with $\deg_G(y)=1$ and $\omega(x) \ne 1$, $(y,x)\in E(\D)$ if $(z,x)\in E(\D)$ for some $z\in N(x)\setminus \{y\}$.
\item If $\C$ is an induced subgraph of $\D$ on a basic $5$-cycle of $G$, then
\begin{enumerate}
\item $\C$ is unmixed.
\item If $x$ is a vertex on $\C$ with $\deg_G(x) > 2$, then $\C\setminus x$ is unmixed.
\item If $x$ is a vertex on $\C$ such that $\omega(x)\ne 1$ and $(w,x)\in E(\D)$ for some $w\in V(\D)\setminus V(\C)$, then $(y,x), (v,x)\in E(\D)$, where $N_{\C}(x)=\{y,v\}$.
\end{enumerate}
\end{enumerate}
\end{enumerate}
}

\medskip

Here we explain some key points in the proof of the theorem \ref{main-theorem}. If $\D$ is Cohen-Macaulay,  so is $I(G) = \sqrt{I(\D)}$ by \cite[Theorem 2.6]{HTT}. Assume further that $G$ has girth at least $5$, then $G$ is in the class $\pc$. For the property $(a)$ we prove a general fact about unmixed monomial ideals based on their primary decompositions (see Lemma \ref{CM-L01}). Next we introduce the notion of reducible vertices on an induced subgraph of $\D$ on a basic $5$-cycle of $G$ to investigate the weight function on it. Together with the structure of $G$, we can prove the combinatorial property $(b)$.

Conversely, if $\D$ satisfies the condition $(3)$. In order to prove $\D$ is Cohen-Macaulay, it suffices to show that $(I(\D),x^m)$ and $I(\D)\colon x^m$ are Cohen-Macaulay where $x$ is a 
reducible vertex on some basic $5$-cycle $C$ of $G$  and $m=\omega(x)$. In fact, we  prove these ideals are Cohen-Macaulay by examining that they are edge ideals of some oriented graphs, and then show that these oriented graphs also satisfy the condition $(3)$, and the conclusion follows by induction.


\section{Preliminaries}

In this section we collect some terminologies and results from the graph theory, the unmixedness of oriented graphs, and Cohen-Macaulay monomial ideals and their colon ideals. We start with some terminologies from the graph theory. Two vertices $u, v$ of a simple undirected graph $G$ are adjacent if $uv$ is an edge of $G$. An independent set in $G$ is a set of vertices no two of which are adjacent to each other. An independent set of maximum size will be referred to as a maximum independent set of $G$, and the independence number of $G$, denoted by $\alpha(G)$, is the cardinality of a maximum independent set in $G$. An independent set $S$ in $G$ is maximal (with respect to set inclusion) if the addition to $S$ of any other vertex in the graph destroys the independence.  If every maximal independent set of $G$ has the same size, namely $\alpha(G)$, the independence number of $G$,  then $G$ is {\it well-covered} (see \cite{P}).  A set $C\subseteq V(G)$ is a {\it vertex cover} of $G$ if every edge of $G$ has an end vertex in $C$.  A vertex cover is minimal if no its proper subset is still a vertex cover. Obviously,  $C$ is a vertex cover if and only if $V(G)\setminus C$ is independent set and vice versa.  

The neighborhood of a vertex $v$ of $G$ is the set $N_G(v) = \{u \mid u \in V(G) \text{ and } vu\in E(G)\}$. The {\it degree} of $v$ in $G$, denoted by $\deg_G(v)$, is the number of its neighbors, i.e. $\deg_G(v) = |N_G(v)|$. If $\deg_G(v) = 1$, $v$ is called a leaf. If an edge is incident with a leaf, it is called a pendant. The closed neighborhood of $v$ is  $N_G[v] = N_G(v) \cup \{v\}$; if there is no ambiguity on $G$, we use $N(v)$ and $N[v]$, respectively. 

For a vertex $v$ of $G$, let $G\setminus v = G\setminus\{v\}$ and $G_v=G\setminus N_G[v]$.  The vertex $v$ is called a {\it shedding vertex} of $G$ if no independent set in $G_v$ is a maximal independent set in $G\setminus v$. If $G$ is well-covered and $v$ is a shedding vertex of $G$, then $G\setminus v$ is well-covered and $\alpha(G\setminus v) =\alpha(G)$.  We next introduce the class of {\it vertex decomposable graphs} (see e.g. \cite{W}).  A graph $G$ is vertex decomposable  if $G$ is a totally disconnected graph (i.e. with no edges) or if it has a shedding vertex $v$ so that $G\setminus v$ and $G_v$ are both vertex decomposable. It is well-known that $G$ is Cohen-Macaulay whenever it is well-covered and vertex-decomposable.

Now assume that $G$ is the underling graph of a oriented graph $\D$. If the weight $\omega$ on $V(G)$ is the trivial one, i.e., $\omega(v)=1$ for all $v\in V(G)$, then $I(\D)$ is just the usual edge ideal of $G$ and denoted by $I(G)$. It is well-known that (see e.g. \cite[Proposition 6.1.16]{Vi}):

\centerline{$\ass(R/I(G)) = \{(v\mid v \in C) \mid C \text{ is a minimal vertex cover of } G\}.$}
\medskip

\noindent In particular,  $\dim R/I(G) = \alpha(G)$ whenever $V(G)=\{x_1,\ldots,x_d\}$. The graph  $G$ is called a Cohen-Macaulay graph if the ring $R/I(G)$ is Cohen-Macaulay. Accordingly,  if $G$ is Cohen-Macaulay, it is well-covered.

Observe that $I(G)=\sqrt{I(\D)}$, thus $\D$ is Cohen-Macaulay if so is $G$ (due to \cite[Theorem 2.6]{HTT}). By this fact, in order to investigate the Cohen-Macaulayness of an oriented graph we first need the structure of the underlying Cohen-Macaulay graph and then look for weight vertices on it. In this paper we focus on graphs of girth at least $5$ and so the following result plays a crucial role in the paper (see \cite[Theorem 20]{BC} or \cite[Theorem 2.4]{HMT}).

\begin{lem}  \label{HMT} Let $G$ be a connected graph of girth at least $5$. Then, the following statements are equivalent:
\begin{enumerate}
\item $G$ is well covered and vertex decomposable;
\item $G$ is Cohen-Macaulay;
\item $G$ is either a vertex or in the class $\mathcal{PC}$.
\end{enumerate}
\end{lem}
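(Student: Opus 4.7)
The plan is to prove the equivalences cyclically: $(1) \Rightarrow (2) \Rightarrow (3) \Rightarrow (1)$.

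Direction $(1) \Rightarrow (2)$ is the general principle already recalled in the preliminaries: vertex decomposability implies sequential Cohen-Macaulayness, and combined with unmixedness (equivalent to well-coveredness by the description of $\ass(R/I(G))$) this upgrades to Cohen-Macaulayness. No girth hypothesis enters here.

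For $(2) \Rightarrow (3)$, note first that a Cohen-Macaulay edge ring is unmixed, hence $G$ is well-covered. The key input is the Finbow--Hartnell--Nowakowski structure theorem for connected well-covered graphs of girth at least five: every such graph is either a single vertex, a member of $\pc$, or isomorphic to one of a short list of sporadic exceptional graphs ($C_7$ together with a handful of graphs on $10$--$14$ vertices). It then suffices to rule out each sporadic exception. For $C_7$ this follows from the classical fact that $R/I(C_n)$ is Cohen-Macaulay only when $n \in \{3,5\}$; for the remaining sporadic graphs one computes depth directly, or applies Reisner's criterion on the corresponding independence complex, to exhibit Cohen-Macaulayness failing. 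This case-by-case elimination is the technically heaviest step.

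For $(3) \Rightarrow (1)$, the approach is induction on $|V(G)|$. The base case $|V(G)|=1$ is immediate. For the inductive step, split on whether $P(G)$ is empty. If $P(G)\neq \emptyset$, pick a pendant edge $uv$ with $\deg_G(v)=1$; the non-leaf endpoint $u$ is a shedding vertex, since $v$ has $u$ as its only neighbor and so is isolated in $G\setminus u$, which means every independent set of $G_u = G\setminus N_G[u]$ can be enlarged by adjoining $v$ and is therefore not maximal in $G\setminus u$. A combinatorial check then shows that $G\setminus u$ and $G_u$ remain in $\pc$ (possibly after discarding isolated vertices arising from removed leaves): the disjointness $P(G)\cap C(G)=\emptyset$ implicit in the definition forces $u$ to lie on no basic $5$-cycle, so the $5$-cycle structure is preserved, while the pendant matching either loses one edge or restricts to the components of $G_u$. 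Induction gives vertex decomposability, and well-coveredness follows by a standard count of maximal independent sets against the pendant matching and the basic $5$-cycles. If instead $P(G)=\emptyset$, then $V(G)=C(G)$, and connectivity together with the girth hypothesis forces $G$ to reduce to a single basic $5$-cycle, which is Cohen-Macaulay, well-covered, and vertex decomposable.

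The principal obstacle is $(2) \Rightarrow (3)$, because beyond invoking the structural classification one must verify ad hoc that each sporadic exceptional graph fails to be Cohen-Macaulay. The $(3) \Rightarrow (1)$ direction, while more mechanical, still requires careful bookkeeping of how the pendant matching and the basic $5$-cycle structure transform under the deletions $G \mapsto G\setminus u$ and $G \mapsto G_u$.
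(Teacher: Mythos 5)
First, a remark on the comparison itself: the paper does not prove this lemma, it imports it verbatim from \cite[Theorem 20]{BC} and \cite[Theorem 2.4]{HMT}, so your proposal can only be measured against those sources. Your outline of $(1)\Rightarrow(2)$ (vertex decomposable $\Rightarrow$ sequentially Cohen--Macaulay, upgraded by purity) and of $(2)\Rightarrow(3)$ (Cohen--Macaulay $\Rightarrow$ well-covered, then the Finbow--Hartnell--Nowakowski classification, then elimination of $C_7$ and the sporadic graphs $P_{10},P_{13},Q_{13},P_{14}$) is indeed the route taken there, with the sporadic eliminations being the computational part you correctly identify.

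There is, however, a genuine gap in your $(3)\Rightarrow(1)$ induction, namely the case $P(G)=\emptyset$. You assert that $V(G)=C(G)$ together with connectivity and girth at least $5$ forces $G$ to be a single basic $5$-cycle. This is false: take two $5$-cycles joined by a bridge $v_1w_1$. Each cycle contains exactly one vertex of degree $3$, so the ``two adjacent vertices of degree at least three'' obstruction never occurs and both cycles are basic; there are no leaves, the girth is $5$, the graph is connected and lies in $\pc$ with $P(G)=\emptyset$. More generally, any tree-like arrangement of basic $5$-cycles joined by bridges is of this type, and your induction offers no shedding vertex for these graphs. The repair --- which is what the cited proofs, and in the weighted setting the proof of Theorem \ref{main-theorem} in this paper, actually use --- is to take a vertex $x$ of degree at least $3$ lying on a basic $5$-cycle $xyzuvx$ (such a vertex exists by connectivity unless $G$ is $2$-regular, i.e.\ a single $5$-cycle). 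Since the cycle is basic, its cycle-neighbours $y$ and $v$ have degree $2$, hence become leaves attached to $z$ and $u$ in $G\setminus x$; an independent set of $G_x$ misses at least one of the adjacent vertices $z,u$ and so can be enlarged by $y$ or by $v$, which shows $x$ is a shedding vertex by exactly the argument you gave for pendant edges. One must then still verify that $G\setminus x$ and $G_x$ remain in the class (basic $5$-cycles meeting $N[x]$ degenerate into pendant-matched paths on four vertices), which is the same bookkeeping you defer in the $P(G)\neq\emptyset$ case. Without this additional branch the induction does not close.
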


A monomial ideal $I$ is {\it unmixed} if every its associated prime has the same height.  It is well known that $I$ is unmixed if $R/I$ is Cohen-Macaulay.  If $I(\D)$ is unmixed,  we say that $\D$ is unmixed.  Note that $I(G)=\sqrt{I(\D)}$, so $G$ is well-covered if $\D$ is unmixed.  

In order to investigate the unmixed oriented graph $\D$ we use a criterion given in \cite{PRT}. First we recall the following definition.

\begin{defn} Let $G$ be the underlying graph of an oriented graph $\D$. For a vertex cover $C$ of $G$, define
\begin{align*}
L_1(C) &= \{x\in C \mid (x,y) \in E(\D) \text { for some } y\notin C\},\\
L_2(C) &=\{x\in C\mid x\notin L_1(C) \text{ and } (y,x)\in E(\D) \text{ for some } y\notin C\},\\
L_3(C) &= \{x\in C\mid N_G(x)\subseteq C\} = C \setminus (L_1(C) \cup L_2(C)).
\end{align*}

A vertex cover $C$ of $G$ is called a {\it strong vertex cover} of $\D$ if either $C$ is a minimal vertex cover of $G$ or for all $x\in L_3(C)$ there is $(y, x)\in E(\D)$ such that $y\in L_2(C)\cup L_3(C)$ with $\omega(y)\geqslant 2$. 
\end{defn}

\begin{lem} \cite[Theorem 31]{PRT}\label{unmixedness} $I(\D)$ is unmixed if and only if $G$ is well-covered and $L_3(C)=\emptyset$ for every strong vertex cover $C$ of $\D$.
\end{lem}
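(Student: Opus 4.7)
The plan is to reduce the question to understanding the irredundant primary decomposition of $I(\D)$, and to identify which vertex covers of $G$ correspond to associated primes. Since $I(\D)$ is a monomial ideal, each of its associated primes has the form $P_C = (x_i \mid x_i \in C)$ for some $C \subseteq V(\D)$, and because $\sqrt{I(\D)} = I(G)$, the set $C$ must be a vertex cover of $G$; in particular $\hgt P_C = |C|$. I would also record the elementary fact that a vertex cover $C$ is \emph{minimal} precisely when $L_3(C) = \emptyset$: indeed, $x \in L_3(C)$ exactly when $N_G(x) \subseteq C$, equivalently when $C \setminus \{x\}$ is still a vertex cover.

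The technical core is to show that $\ass(R/I(\D)) = \{P_C \mid C \text{ is a strong vertex cover of } \D\}$. For each vertex cover $C$ of $G$ I would introduce the irreducible monomial ideal
$$ \mathfrak{q}_C = (x_i \mid x_i \in L_1(C)) + (x_j^{\omega(x_j)} \mid x_j \in L_2(C) \cup L_3(C)), $$
whose radical is $P_C$, and check directly that every generator $x_i x_j^{\omega(x_j)}$ of $I(\D)$ lies in $\mathfrak{q}_C$, so $I(\D) \subseteq \bigcap_C \mathfrak{q}_C$. The reverse containment is verified monomial by monomial. The delicate step is to show that $\mathfrak{q}_C$ is redundant in this intersection exactly when $C$ fails to be strong: if some $x \in L_3(C)$ has no in-neighbor $y \in L_2(C) \cup L_3(C)$ with $\omega(y) \geq 2$, one can locally modify $C$ (dropping the exponent on $x$ to $1$, or trading $x^{\omega(x)}$ for a linear factor coming from another cover) to express $\mathfrak{q}_C$ as an intersection of strictly simpler irreducible components. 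Conversely, for each strong cover $C$ I would build an explicit witness monomial $u$ -- a product of the variables outside $C$ with the powers $x^{\omega(x)-1}$ for $x \in L_2(C) \cup L_3(C)$ -- such that $(I(\D):u) = P_C$, certifying $P_C$ as associated.

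Granting this characterization, the lemma is immediate. Unmixedness of $I(\D)$ says the sizes $|C|$ all coincide as $C$ ranges over strong vertex covers. Minimal vertex covers of $G$ are automatically strong, and they share the common size $d - \alpha(G)$ exactly when $G$ is well-covered; any strong non-minimal cover has strictly larger size by the criterion $L_3(C) = \emptyset$. So unmixedness of $I(\D)$ is equivalent to $G$ being well-covered together with the vanishing of $L_3(C)$ on every strong vertex cover. The main obstacle, as expected, is establishing the characterization of associated primes: the redundancy argument requires a careful case analysis of how to swap a power $x^{\omega(x)}$ in one irreducible component for a linear factor in another, and this is precisely where the weights $\omega$ and the orientation of edges enter in an essential way.
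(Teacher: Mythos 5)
The paper does not actually prove this lemma: it is imported verbatim as \cite[Theorem 31]{PRT}, so there is no internal proof to compare against. Your outline reconstructs exactly the architecture of the Pitones--Reyes--Toledo argument: show that $I(\D)=\bigcap_C \mathfrak{q}_C$ with $\mathfrak{q}_C=(x_i \mid x_i\in L_1(C))+(x_j^{\omega(x_j)}\mid x_j\in L_2(C)\cup L_3(C))$ an irredundant irreducible decomposition as $C$ runs over the strong vertex covers, read off $\ass(R/I(\D))=\{P_C \mid C \text{ strong}\}$, observe that minimality of a cover is equivalent to $L_3(C)=\emptyset$ and that minimal covers are strong by definition, and then compare heights. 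The easy containment $I(\D)\subseteq\mathfrak{q}_C$ and the final height comparison are fine as you state them.

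Two caveats. First, your explicit witness monomial is wrong as written: if $u$ is the product of the variables outside $C$ (to the first power) times $\prod_{x\in L_2(C)\cup L_3(C)}x^{\omega(x)-1}$, then for $x\in L_1(C)$ with out-neighbour $y\notin C$ and $\omega(y)\geqslant 2$ the generator $xy^{\omega(y)}$ does not divide $xu$, so $x\notin I(\D)\colon u$ and $I(\D)\colon u\subsetneq P_C$. Already for a single edge $(x,y)$ with $\omega(y)=2$ and $C=\{x\}$ your $u=y$ gives $(xy^2)\colon y=(xy)\ne(x)$; the vertices outside $C$ must enter $u$ with exponent $\omega$, not $1$. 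Second, the step you correctly identify as delicate --- that $\mathfrak{q}_C$ is redundant precisely when $C$ fails to be strong, equivalently that $\bigcap_{C\ \text{strong}}\mathfrak{q}_C\subseteq I(\D)$ --- is only gestured at (``locally modify $C$'', ``trade $x^{\omega(x)}$ for a linear factor''), and this is where essentially all of the content of \cite[Theorem 31]{PRT} lives; as a proof it is not yet there, though as a plan it points in the right direction.
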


\medskip

We now state a technique to study the Cohen-Macaulayness of monomial ideals.

\begin{lem}\label{CM-Q} Let $I$ be a monimial ideal and $f$ a monomial not in $I$.  We have 
\begin{enumerate}
\item If $I$ is Cohen-Macaulay, then $I\colon f$ is Cohen-Macaulay.
\item If $I\colon f$ and $(I,f)$ are Cohen-Macaulay with $\dim R/I\colon f = \dim R/(I,f)$, then $I$ is Cohen-Macaulay.
\end{enumerate}
\end{lem}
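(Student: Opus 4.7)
My plan is to base both parts on the short exact sequence
$$0 \longrightarrow R/(I:f) \xrightarrow{\;\cdot f\;} R/I \longrightarrow R/(I,f) \longrightarrow 0,$$
whose exactness is immediate from the definition of the colon ideal: the left map is injective since $rf\in I \iff r\in I:f$, and its cokernel is $R/(fR+I)=R/(I,f)$. The two workhorses will be the depth lemma and the standard associated-prime inclusions
$$\ass R/(I:f) \;\subseteq\; \ass R/I \;\subseteq\; \ass R/(I:f)\cup \ass R/(I,f),$$
both valid for any short exact sequence of finitely generated modules.

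For part (2), I set $d := \dim R/(I:f)=\dim R/(I,f)$. Since both outer terms are Cohen-Macaulay of dimension $d$, their depths equal $d$. The inclusion of associated primes pins $\dim R/I = d$, and the depth lemma applied to the middle term gives
$$\depth R/I \;\geq\; \min\{\depth R/(I:f),\,\depth R/(I,f)\} \;=\; d,$$
so $R/I$ is Cohen-Macaulay. This direction is routine and is the one actually exploited later in the paper.

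For part (1), the inclusion $\ass R/(I:f)\subseteq\ass R/I$ combined with the unmixedness of the Cohen-Macaulay $R/I$ forces every associated prime of $I:f$ to have height $\hgt I$, so $\dim R/(I:f)=\dim R/I =: d$. The depth equality $\depth R/(I:f)=d$ is the delicate step; a pure application of the depth lemma is insufficient, since it only gives $\depth R/(I:f)\geq d$ conditional on the a priori unknown lower bound $\depth R/(I,f)\geq d-1$. My plan to finish is to polarize: polarization preserves Cohen-Macaulayness and is compatible with colon by a single variable, so the question reduces to the squarefree case. In that case $I=I_\Delta$ for a simplicial complex $\Delta$, and $I:x$ is the Stanley--Reisner ideal of $\mathrm{star}_\Delta(x)$, which is the cone with apex $x$ over $\mathrm{link}_\Delta(x)$. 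By Reisner's criterion the link of any face in a Cohen-Macaulay complex is Cohen-Macaulay, and coning preserves the property at the Stanley--Reisner level, so $R/(I:x)$ is Cohen-Macaulay; iterating over the variables (and their powers) that divide $f$ then yields the general statement.

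The principal obstacle is the polarization step: one must verify carefully that polarization is compatible with colon by a variable so that the reduction to the squarefree setting is harmless. Once this is in place, part (1) becomes a standard application of the link/cone machinery for Stanley--Reisner rings, while part (2) is a direct depth lemma computation as above.
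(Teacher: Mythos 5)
Your argument is correct, and part (2) is essentially the paper's own argument: both rest on the exact sequence $0 \to R/(I\colon f) \to R/I \to R/(I,f)\to 0$ and the resulting identity $\dim R/I=\max\{\dim R/(I\colon f),\dim R/(I,f)\}$; the paper then invokes \cite[Lemma 4.1 and Theorem 4.3]{CHHKTT}, whereas you run the depth lemma by hand, which is an equally valid (and more self-contained) way to get $\depth R/I\geqslant d=\dim R/I$. The genuine divergence is in part (1). The paper again simply cites \cite{CHHKTT}, whose Lemma 4.1 gives the inequality $\depth R/(I\colon f)\geqslant \depth R/I$ for monomial ideals and monomials $f\notin I$; combined with $\ass R/(I\colon f)\subseteq \ass R/I$ and unmixedness of the Cohen--Macaulay ring $R/I$ (which pins $\dim R/(I\colon f)=\dim R/I$, exactly as you argue), this finishes immediately. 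You instead reprove that depth inequality from scratch via polarization and Reisner's criterion. That route does work: you correctly identify the polarization--colon compatibility as the point needing care, and it does hold --- if $x_1$ denotes the first polarized copy of a variable $x$, then a minimal generator $m^{\mathrm{pol}}$ of $I^{\mathrm{pol}}$ is divisible by $x_1$ exactly when $x\mid m$, so $I^{\mathrm{pol}}\colon x_1$ is, after the relabeling $x_j\mapsto x_{j-1}$ and up to free variables not appearing in the ideal, the polarization of $I\colon x$ (divisibility among generators is preserved by polarization, so this is independent of the chosen generating set); iterating over the variable powers dividing $f$, and noting that $\{x_1\}$ is a face since $f\notin I$ forces $x\notin I$, reduces everything to the statement that the star of a vertex in a Cohen--Macaulay complex is a cone over a Cohen--Macaulay link, hence Cohen--Macaulay of the same dimension. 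What your approach buys is independence from \cite{CHHKTT} at the cost of the polarization bookkeeping; what the paper's citation buys is brevity, and the cited result is strictly more flexible (it bounds depth without any Cohen--Macaulay hypothesis, which is what Theorem 4.3 of \cite{CHHKTT} also supplies for part (2)).
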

\begin{proof} From the exact sequence 
$$0 \to R/(I\colon f) \to R/I \to R/(I,f)\to 0$$
we obtain
$$\dim(R/I) = \max\{\dim R/(I\colon f), \dim R/(I,f)\}.$$

Hence, the lemma follows from this fact and \cite[Lemma 4.1 and Theorem 4.3]{CHHKTT}.
\end{proof}

\begin{lem} \label{dim} Let $G$ be a well-covered graph.  If $v$ is a shedding vertex of $G$, then 
$$\dim R/I(G) = \dim R/(I(G\setminus v),v)=\dim R/I(G):v.$$
\end{lem}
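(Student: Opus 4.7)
The plan is to show that all three quantities equal the independence number $\alpha(G)$. For $\dim R/I(G)$ this is the standard identity noted in the preliminaries, namely $\dim R/I(G)=\alpha(G)$. So the task reduces to proving $\dim R/(I(G\setminus v),v)=\alpha(G)$ and $\dim R/(I(G):v)=\alpha(G)$, each using a distinct piece of the hypotheses (shedding is needed only for the first).

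For $(I(G\setminus v),v)$, I would observe that $R/(I(G\setminus v),v)\cong K[V(G)\setminus\{v\}]/I(G\setminus v)$, so its Krull dimension is $\alpha(G\setminus v)$. It then suffices to show $\alpha(G\setminus v)=\alpha(G)$. Let $T$ be a maximum independent set of $G\setminus v$. If $T\cap N_G(v)=\emptyset$, then $T\subseteq V(G_v)$ and $T$ is independent in $G_v$; since $T$ is maximal in $G\setminus v$, no vertex in $V(G_v)\subseteq V(G\setminus v)$ can be added to $T$ without destroying independence, so $T$ is already maximal in $G_v$ — contradicting the shedding hypothesis. Hence $T\cap N_G(v)\ne\emptyset$, so $T\cup\{v\}$ is not independent in $G$, and $T$ is a maximal independent set of $G$; by well-coveredness $|T|=\alpha(G)$, giving $\alpha(G\setminus v)=\alpha(G)$.

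For $(I(G):v)$, the key algebraic step is the colon identity
$$I(G):v \;=\; \bigl(I(G_v),\;N_G(v)\bigr).$$
I would justify this by a direct divisibility argument on monomial generators: a monomial $m$ lies in $I(G):v$ iff $vm$ is divisible by some edge $uw$; if $v\notin\{u,w\}$ then $uw\mid m$ is an edge not meeting $v$, while if $v\in\{u,w\}$ then the other endpoint, a vertex of $N_G(v)$, divides $m$; the edges of $G\setminus v$ involving $N_G(v)$ are absorbed into the generators from $N_G(v)$, leaving only $I(G_v)$. Passing to the quotient, $R/(I(G):v)\cong K[v]\otimes_K K[V(G_v)]/I(G_v)$, whose dimension is $1+\alpha(G_v)$. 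To finish, I would show $\alpha(G_v)=\alpha(G)-1$ using only well-coveredness: for one inequality, a maximum independent set $S$ of $G_v$ yields the independent set $S\cup\{v\}$ of $G$, which extends to a maximal (hence maximum) independent set $T$ with $|T|=\alpha(G)$, so $|S|\le\alpha(G)-1$; for the other, start from $\{v\}$, extend to a maximal independent set $T\ni v$ of $G$, and note $T\setminus\{v\}\subseteq V(G_v)$ is independent in $G_v$ of size $\alpha(G)-1$.

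There is no real obstacle here: the statement is an almost-formal unwinding of definitions. The only step requiring a small argument is the colon identity $I(G):v=(I(G_v),N_G(v))$, and the only place the shedding hypothesis is actually used is in ruling out $T\cap N_G(v)=\emptyset$ for a maximum independent set $T$ of $G\setminus v$; everything else follows from $G$ being well-covered.
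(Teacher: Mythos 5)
Your proof is correct, and it diverges from the paper's in a way worth noting. For the equality $\dim R/I(G)=\dim R/(I(G\setminus v),v)$ you and the paper do essentially the same thing (reduce to $\alpha(G\setminus v)=\alpha(G)$), except that the paper simply cites this as a known consequence of well-coveredness plus shedding, stated in its preliminaries, whereas you reprove it from the definitions; your argument is fine, though the contradiction you should name is that $T$ is an independent set \emph{of $G_v$} that is maximal \emph{in $G\setminus v$} (which is exactly what the shedding hypothesis forbids), rather than that $T$ is maximal in $G_v$. The real difference is in the colon-ideal equality: the paper dispatches it in one line by observing that $I(G)$ is unmixed (since $G$ is well-covered) and $v\notin I(G)$, so $\hgt(I(G)\colon v)=\hgt(I(G))$ because $I(G)\colon v$ is the intersection of those minimal primes of $I(G)$ not containing $v$, all of the same height; the shedding hypothesis is not used there at all. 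You instead compute $I(G)\colon v=(N_G(v))+I(G_v)$ explicitly and count $\dim=1+\alpha(G_v)=1+(\alpha(G)-1)$. Both are valid; the paper's route is shorter and exploits unmixedness abstractly, while yours is more explicit and yields the extra combinatorial facts $I(G)\colon v=(N_G(v))+I(G_v)$ and $\alpha(G_v)=\alpha(G)-1$ (the first of which the paper in effect also uses later, when it writes $\sqrt{I(\D)\colon x^m}=(y,v,I(G_x))$).
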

\begin{proof} We may assume that $V(G)=\{x_1,\ldots,x_d\}$ so that $\dim R/I(G)=\alpha(G)$. Since $v$ is a shedding vertex and $G$ is well-covered, we have $\alpha(G)=\alpha(G\setminus v)$. Hence,
\begin{align*}
\dim R/(v,I(G)) &= \dim R/(v,I(G\setminus v)) = \dim K[V(G\setminus v)]/ I(G\setminus v) \\
&= \alpha(G\setminus v) = \alpha(G) = \dim R/I(G).
\end{align*}

Finally,  since $I(G)$ is unmixed and $v\notin I(G)$, we have $\hgt(I(G)) = \hgt(I(G)\colon v)$, and hence
$\dim R/(I(G):v) = \dim R/I(G)$,  and the lemma follows.
\end{proof}

\medskip

Let $G$ be the underlying graph of an oriented graph $\D$. A vertex $v$ of $\D$ is a source if $(v,x) \in E(\D)$ whenever $vx\in E(G)$ and it is a sink if $(x,v)\in E(G)$ whenever $vx\in E(G)$. Since the ideal $I(\D)$ does not change if we vary the weight of any source, so that we often assign weight $1$ to a source. With this convention, we have the following result which classifies Cohen-Macaulay oriented graphs whose underlying graphs have a perfect matching consisting of pendant edges.

\begin{lem}\label{H2} \cite[Theorem 1.1]{HLMRV} Let $\D$ be an oriented graph and let $G$ be its underlying graph. Suppose that $G$ has a perfect matching 
$\{x_1y_1,\ldots , x_ry_r\}$, where $y_i$'s are leaves in $G$. Then the following are equivalent:
\begin{enumerate}
\item $\D$ is Cohen-Macaulay;
\item $\D$ is unmixed;
\item for each edge $x_sy_s$ of $G$, if $(x_r,x_s)\in E(\D)$ for some $r$, then $(y_s,x_s)\in E(\D)$.
\end{enumerate}
\end{lem}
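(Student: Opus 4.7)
The plan is to prove the equivalences along the cycle $(1) \Rightarrow (2) \Rightarrow (3) \Rightarrow (1)$. The implication $(1) \Rightarrow (2)$ is immediate, since a Cohen-Macaulay quotient of a polynomial ring is always unmixed.

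For $(2) \Rightarrow (3)$, I would argue contrapositively using the strong vertex cover criterion in Lemma~\ref{unmixedness}. Suppose $(3)$ fails, so there exist indices $s$ and $t$ with $(x_t, x_s) \in E(\D)$ and $(y_s, x_s) \notin E(\D)$; the latter forces $(x_s, y_s) \in E(\D)$, so $x_s$ is neither a source nor a sink, and by the weight convention we may take $\omega(x_s) \ge 2$. The goal is to exhibit a non-minimal strong vertex cover $C$ to witness an associated prime of $I(\D)$ of height larger than $r$. A natural candidate is $C = \{x_s, y_s\} \cup \{x_i : i \ne s\}$, of size $r+1$. One checks directly that $N_G(y_s) = \{x_s\} \subseteq C$ and $N_G(x_s) = \{y_s\} \cup \{x_j : x_sx_j \in E(G)\} \subseteq C$, placing both $x_s$ and $y_s$ in $L_3(C)$. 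Strongness at $y_s$ is witnessed by the directed edge $(x_s, y_s)$ together with $x_s \in L_3(C)$ and $\omega(x_s) \ge 2$; strongness at $x_s$ then follows from a case analysis on the orientations at neighbors of $x_s$, producing some $x_t \in L_2(C)$ of weight $\ge 2$.

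For $(3) \Rightarrow (1)$, I would induct on $r$. The base $r = 1$ gives a principal ideal, hence Cohen-Macaulay. For $r > 1$, I pick a pendant pair $\{x_s, y_s\}$ and split on the orientation of the pendant edge $x_sy_s$. If $y_s$ is a source, then $\omega(y_s) = 1$; apply Lemma~\ref{CM-Q} with $f = y_s$ to study $(I(\D), y_s) = (y_s) + I(\D \setminus y_s)$ and $I(\D):y_s = (x_s^{\omega(x_s)}) + I(\D \setminus y_s)$, both of which reduce via the induction hypothesis together with a short analysis of the Artinian factor coming from the pure-power generator. If instead $y_s$ is a sink, condition $(3)$ forces no $(x_t, x_s) \in E(\D)$, so every edge at $x_s$ is outgoing; hence $x_s$ is itself a source with $\omega(x_s) = 1$. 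Apply Lemma~\ref{CM-Q} with $f = x_s$: the sum $(I(\D), x_s) = (x_s) + I(\D \setminus \{x_s, y_s\})$ is Cohen-Macaulay (via an isolated $y_s$ factor plus the induction hypothesis on the smaller oriented graph, which still satisfies $(3)$ by restriction), while the colon takes the form $I(\D):x_s = J + I(\D \setminus \{x_s, y_s\})$ with $J = (y_s^{\omega(y_s)}, x_j^{\omega(x_j)} : x_j \in N_G(x_s) \setminus \{y_s\})$. The dimension identities required for Lemma~\ref{CM-Q}(2) follow from Lemma~\ref{dim} applied to the well-covered underlying graph $G$.

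I expect the main obstacle to lie in the sink sub-case of $(3) \Rightarrow (1)$: the colon ideal $I(\D) : x_s$ is not the edge ideal of a single smaller oriented graph but a hybrid combining pure-power generators in $J$ with the Cohen-Macaulay edge ideal $I(\D \setminus \{x_s, y_s\})$. Establishing its Cohen-Macaulayness will require a separate verification, either by showing that the pure powers in $J$ form a regular sequence on the Cohen-Macaulay quotient $R/I(\D \setminus \{x_s, y_s\})$, or by exhibiting a tensor-product decomposition of the quotient into an Artinian factor over the induced Cohen-Macaulay edge-ideal quotient.
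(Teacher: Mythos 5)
The paper gives no proof of Lemma~\ref{H2}; it is quoted from \cite[Theorem 1.1]{HLMRV}, so your attempt stands or falls on its own. The implication $(1)\Rightarrow(2)$ is fine, but both remaining directions have genuine gaps. In $(2)\Rightarrow(3)$, the step ``by the weight convention we may take $\omega(x_s)\ge 2$'' is not available: the convention only normalizes weights of \emph{sources} to $1$, and $x_s$ is not a source here, so $\omega(x_s)=1$ is a live case your argument silently discards. It is not a harmless case either: for the path $y_1x_1x_2y_2$ with $(x_1,y_1),(x_1,x_2),(x_2,y_2)\in E(\D)$, $\omega(x_2)=1$ and $\omega(y_2)\ge 2$, the ideal $(x_1y_1^{a},x_1x_2,x_2y_2^{c})=(x_1,x_2)\cap(x_1,y_2^{c})\cap(y_1^{a},x_2)$ is unmixed (even Cohen--Macaulay) while $(y_2,x_2)\notin E(\D)$ and cannot be added since $\omega(y_2)\ge 2$; so the implication really requires the hypothesis $\omega(x_s)\ne 1$ (compare condition (3)(a) of Theorem~\ref{main-theorem}), and your reduction to that case needs an argument, not the convention. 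Moreover, even granting $\omega(x_s)\ge 2$, your fixed cover $C=\{x_s,y_s\}\cup\{x_i: i\ne s\}$ need not be strong: with $(y_1,x_1),(x_1,x_2),(x_2,y_2)\in E(\D)$, $\omega(x_1)=1$, $\omega(x_2)=2$, the set $\{x_1,x_2,y_2\}$ puts $x_2$ in $L_3$ but its only in-neighbour $x_1$ has weight $1$, so no witness exists; the cover that actually detects non-unmixedness is $\{y_1,x_2,y_2\}$, obtained by leaving the in-neighbour $x_t$ \emph{outside} the cover so that $x_s$ lands in $L_2(C)$ and needs no witness. Your ``case analysis producing some $x_t\in L_2(C)$ of weight $\ge 2$'' cannot succeed for the cover you wrote down; the cover must be chosen adaptively.

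The implication $(3)\Rightarrow(1)$ is also not closed. Deleting the leaf $y_s$ destroys the standing hypothesis (in $\D\setminus y_s$ the vertex $x_s$ is no longer matched to a leaf), so the induction hypothesis does not apply to $\D\setminus y_s$, and the ideals $(y_s)+I(\D\setminus y_s)$ and $(x_s^{\omega(x_s)})+I(\D\setminus y_s)$ are precisely the hybrids you flag at the end; you name two possible strategies for them but carry out neither, and the pure powers $x_j^{\omega(x_j)}$ are visibly not a regular sequence on $R/I(\D\setminus\{x_s,y_s\})$ when $x_j$ still occurs in generators of that ideal. The reduction that works --- and the one this paper itself uses in the analogous Subcase 1.1 of Theorem~\ref{main-theorem} --- is to apply Lemma~\ref{CM-Q} with $f=x_s^{\omega(x_s)}$ at the support vertex, identify $(I(\D),f)$ and $I(\D)\colon f$ with (colons of) edge ideals of auxiliary oriented graphs that still carry pendant perfect matchings and still satisfy $(3)$, and get the dimension equality from Lemma~\ref{dim}. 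As written, your proposal establishes $(1)\Rightarrow(2)$ and little else.
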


In order to write the lemma \ref{H2} for the unmixed path of length $3$, for an edge $xy$ of $G$ define
$$
m_{\D}(xy) = \begin{cases}
xy^{\omega(y)} & \text{ if } (x,y)\in E(\D),\\
yx^{\omega(x)} & \text{ if } (y,x) \in E(\D),
\end{cases}
$$
which is called the monomial corresponding to the edge of $\D$ induced on $xy$. Then,

\begin{cor}\label{corP4} If the underlying graph $G$ of $\D$ is a path of length $3$ with $E(G) = \{xy,yz,zv\}$, then $\D$ is unmixed if and only if it satisfies:
\begin{enumerate}
\item if $(y,z)\in E(\D)$ and $\omega(z)\ne 1$, then $(v,z)\in E(\D)$, or
\item if $(z,y)\in E(\D)$ and $\omega(y)\ne 1$, then $(x,y)\in E(\D)$.
\end{enumerate}
Or equivalently, $$\deg_y(m_{\D}(yz)) \leqslant \deg_y(m_{\D}(xy)) \text{ and } \deg_z(m_{\D}(yz)) \leqslant \deg_z(m_{\D}(vz)).$$
\end{cor}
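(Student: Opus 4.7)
The plan is to apply Lemma~\ref{unmixedness}: $\D$ is unmixed if and only if $G$ is well-covered and $L_3(C)=\emptyset$ for every strong vertex cover $C$ of $\D$. First I verify that $G$ is well-covered: its minimal vertex covers $\{y,z\}$, $\{x,z\}$, $\{y,v\}$ all have size two, and for each such $C$ every vertex in $C$ has a neighbor outside $C$, so $L_3(C)=\emptyset$ automatically and minimal covers impose no obstruction.

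The main work is to enumerate the non-minimal vertex covers, namely $\{x,y,z\}$, $\{x,y,v\}$, $\{x,z,v\}$, $\{y,z,v\}$, and $V(G)$, and to determine when each can be strong. A quick inspection shows that each non-minimal cover has $L_3(C)\neq\emptyset$, so by Lemma~\ref{unmixedness} unmixedness of $\D$ is equivalent to none of them being strong. The two decisive cases are $C=\{x,z,v\}$ and $C=\{x,y,v\}$. For $C=\{x,z,v\}$ the leaf $v$ lies in $L_3(C)$, so $C$ being strong forces $(z,v)\in E(\D)$, $\omega(z)\geq 2$, and $z\in L_2(C)\cup L_3(C)$; since $y\notin C$ is a neighbor of $z$, the last membership reduces to $(y,z)\in E(\D)$. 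Hence $\{x,z,v\}$ is strong exactly when condition~(1) fails. A symmetric analysis of $C=\{x,y,v\}$ yields condition~(2). For the remaining three non-minimal covers, carrying out the same $L_i$-bookkeeping shows that $V(G)$ being strong would require both $(y,z)$ and $(z,y)$ to lie in $E(\D)$, which is impossible, while the strong-cover requirements for $\{x,y,z\}$ and $\{y,z,v\}$ strictly include those for $\{x,y,v\}$ and $\{x,z,v\}$ respectively, so conditions (1) and (2) already rule them out.

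The equivalent reformulation in terms of monomial degrees is a direct two-case check. When $(y,z)\in E(\D)$, we have $m_\D(yz)=yz^{\omega(z)}$, so the first inequality $\deg_y m_\D(yz)=1\leq \deg_y m_\D(xy)$ is automatic, while the second, $\omega(z)\leq \deg_z m_\D(vz)$, is equivalent to $\omega(z)=1$ or $(v,z)\in E(\D)$, which is precisely condition~(1); the case $(z,y)\in E(\D)$ is symmetric and produces condition~(2). The main obstacle is simply the careful bookkeeping of the sets $L_1(C), L_2(C), L_3(C)$ across the five non-minimal covers, though each individual computation is elementary.
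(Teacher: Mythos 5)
Your proof is correct, but it takes a genuinely different route from the paper. The paper treats this statement as an immediate specialization of Lemma~\ref{H2} (the Ha--Lin--Morey--Reyes--Villarreal classification): the path $x$--$y$--$z$--$v$ has the perfect matching $\{xy,zv\}$ consisting of pendant edges with leaves $x$ and $v$, so that result applies verbatim, and conditions (1) and (2) are just its criterion for the matching edges $xy$ and $zv$ rewritten with the paper's conventions on sources and on weight-one edges (which is why the guards $\omega(z)\ne 1$ and $\omega(y)\ne 1$ appear). You instead prove the statement from scratch via Lemma~\ref{unmixedness}, enumerating the three minimal and five non-minimal vertex covers of $P_4$ and determining exactly when each non-minimal one is strong. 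Your bookkeeping checks out: $L_3$ is nonempty for every non-minimal cover, $\{x,z,v\}$ and $\{x,y,v\}$ are strong precisely when (1), respectively (2), fails, the remaining covers are dominated, and the degree reformulation is a routine case check. One small point you gloss over: $z\in L_2(\{x,z,v\})$ also requires $z\notin L_1$, i.e.\ $(z,y)\notin E(\D)$; this follows from $(y,z)\in E(\D)$ and $\omega(z)\geqslant 2$ only because of the paper's convention that both orientations of an edge coexist only when both endpoints have weight $1$, so it is worth saying explicitly. The trade-off is clear: the paper's derivation is a one-line appeal to a cited theorem, while yours is self-contained and makes visible exactly which strong covers obstruct unmixedness --- at the cost of a case analysis that would not scale beyond small graphs.
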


\medskip

For convenient, we make use the following convention.

\begin{defn} For a subgraph $G'$ of $G$, the induced subgraph of $\D$ on $G'$ is the oriented graph $\D'$ defined by
\begin{enumerate}
\item $V(\D') = V(G')$ and $E(\D')=\{(x,y) \mid (x,y)\in E(\D) \text{ and }  xy \in E(G')\}$.
\item The weight function is the restriction of $\omega$ on $V(G')$.
\end{enumerate}
In the case $G'$ is an induced subgraph of $G$, $\D'$ is called an induced subgraph of $\D$. 
\end{defn}

In other words, we make $G'$ become an oriented subgraph graph of $\D$ by inheriting direction and weight from it. For a subset $S$ of $V(G)$, we write $\D\setminus S$ stands for the  subgraph of $\D$ induced on $G\setminus S$, i.e. the oriented graph obtained from $\D$ by deleting all vertices in $S$ and all edges incident to $S$. Note that $\D \setminus S$ is an induced subgraph of $\D$. If $S=\{v\}$ for some $v\in V(G)$, we write $\D\setminus v$ to mean $\D\setminus \{v\}$.

\medskip

\begin{rem} If $(x,y)\in E(\D)$ with $\omega(x)=\omega(y)=1$ then the ideal $I(\D)$ is not changed when adding the new directed edge $(y,x)$ into $\D$, so that in this paper we admit $(x,y)$ and $(y,x)$ into edges of $\D$ for any edge $xy\in E(G)$ with $\omega(x)=\omega(y)=1$.
\end{rem}

We next introduce the notion of a {\it reducible vertex} on an induced subgraph of $\D$ on an induced $5$-cycle of $G$, which plays a key role in the paper. 

\begin{defn} Let $C$ be an induced $5$-cycle of $G$ with $E(C) = \{xy,yz,zu,uv,vx\}$ and let $\C$ be the induced subgraph of $\D$ on $C$. The vertex $x$ is called a reducible vertex on $\C$ (see Figure \ref{BVertex}) if $\C\setminus x$ is unmixed and
\begin{enumerate}
\item either $(y,x), (v,x) \in E(\D)$, or
\item $\omega(x)=1$ and either $(y,x), (x,v),(u,v)\in E(\D)$ or $(v,x), (x,y),(z,y)\in E(\D)$.
\end{enumerate}
\end{defn}
\begin{figure}[h]
\includegraphics[scale=0.5]{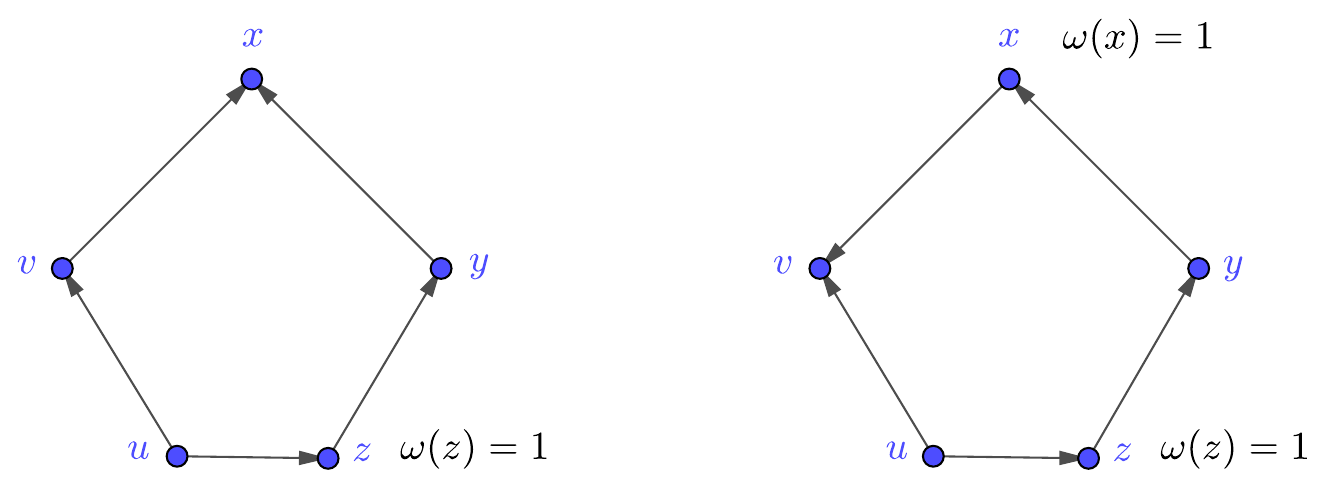}\\
\medskip
\caption{The vertex $x$ is reducible for each oriented graph.}
\label{BVertex}
\end{figure}

In the rest of this section, assume that the underlying graph $G$ of $\D$ is a $5$-cycle with the edge set $E(G)=\{xy,yz,zu,uv,vx\}$. Here we restate and prove a result given in \cite{PRT, SS} on classifying Cohen-Macaulay oriented graph $\D$ in a compact way
 (see \cite[Theorem 49]{PRT} and \cite[Theorem 4.5]{SS}). Note that $\D$ is unmixed if and only if  every strong vertex cover of $\D$ has just $3$ vertices due to Lemma \ref{unmixedness}.

\begin{lem}\label{CM-C5}  The following conditions are equivalent:
\begin{enumerate}
\item $\D$ is Cohen-Macaulay.
\item $\D$ is unmixed.
\item $\D$ has a reducible vertex.
\end{enumerate}
\end{lem}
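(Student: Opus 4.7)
The plan is to prove the cycle $(1)\Rightarrow (2)\Rightarrow (3)\Rightarrow (1)$. The implication $(1)\Rightarrow (2)$ is immediate, as Cohen-Macaulay monomial ideals are always unmixed. The substantive implications are $(3)\Rightarrow (1)$, which I treat by the colon-plus-quotient technique of Lemma \ref{CM-Q}, and $(2)\Rightarrow (3)$, a finite case analysis using the strong vertex cover criterion of Lemma \ref{unmixedness}.

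For $(3)\Rightarrow (1)$, I take $x$ a reducible vertex on $\C$, set $m = \omega(x)$, and apply Lemma \ref{CM-Q}(2) with $f = x^m$. For the quotient, every generator of $I(\D)$ containing $x$ is absorbed into $(x^m)$, so $(I(\D), x^m) = (x^m) + I(\D\setminus x)$. Since $x$ does not appear in $I(\D\setminus x)$, this identifies the quotient ring with the tensor product $K[x]/(x^m) \otimes_K R'/I(\D\setminus x)$, where $R'$ is the polynomial ring in the remaining four variables. The underlying graph of $\D\setminus x$ is a path of length three whose pendant edges form a perfect matching, and by the definition of reducibility $\D\setminus x$ is unmixed; Lemma \ref{H2} delivers that it is Cohen-Macaulay, and so is the tensor product, of dimension $2$.

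For the colon ideal, the two reducibility cases are parallel. In case (a), dividing $yx^m$ and $vx^m$ by $x^m$ yields $y$ and $v$, and the remaining generators $m_{\D}(yz)$ and $m_{\D}(uv)$ lie in $(y)$ and $(v)$ respectively; hence $I(\D):x^m = (y, v, m_{\D}(zu))$, a complete intersection in three pairwise disjoint variable sets and Cohen-Macaulay of dimension $2$. In case (b) with $\omega(x)=1$ and $(y,x),(x,v),(u,v)\in E(\D)$, the same cancellations together with $m_{\D}(uv) = uv^{\omega(v)} \in (v^{\omega(v)})$ produce $I(\D):x = (y, v^{\omega(v)}, m_{\D}(zu))$, again a complete intersection of dimension $2$; the mirror subcase is identical. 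Since both quotients have dimension $2 = \dim R/I(\D)$, Lemma \ref{CM-Q}(2) concludes $(1)$.

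For $(2)\Rightarrow (3)$, I assume $\D$ is unmixed and split on the weight pattern. After reassigning weight $1$ to every source (which does not change $I(\D)$), either all weights are $1$, in which case the remark preceding this lemma allows me to take every edge bidirectional, so that any vertex $x$ satisfies condition (a) on a path $\C\setminus x$ that is unweighted and well-covered, making $x$ reducible; or there is a non-source vertex $x$ of weight at least $2$, and I use the $L_3(C)=\emptyset$ condition of Lemma \ref{unmixedness} applied to the finitely many strong vertex covers of $C_5$ to force the two edges at $x$ into one of the reducibility patterns (a) or (b), after which Corollary \ref{corP4} yields unmixedness of the path $\C\setminus x$. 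The main technical obstacle is exactly this case analysis: each individual orientation is easy to rule in or out in isolation, but tracking the five minimal vertex covers of $C_5$ together with the non-minimal strong vertex covers, and certifying that no orientation/weight pattern slips past the above dichotomy, is the point where care is needed to keep the argument complete.
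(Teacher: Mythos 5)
Your implication $(3)\Rightarrow(1)$ is correct and is essentially the paper's own argument: the same splitting into $(I(\D),x^m)$ and $I(\D)\colon x^m$, the same identification of the colon ideal as a complete intersection $(y,v,m_{\D}(zu))$ in case (a) and $(y,v^{\omega(v)},m_{\D}(zu))$ in case (b), and an appeal to Lemma \ref{CM-Q}(2) after the dimension count. The genuine gap is in $(2)\Rightarrow(3)$. First, you do not actually carry out the case analysis; you only describe where it would go and yourself flag that ``certifying that no orientation/weight pattern slips past the above dichotomy'' is the delicate point. That certification \emph{is} the content of this implication, so as written the proof is incomplete. Second, and more seriously, the dichotomy you announce is false: it is not true that in an unmixed $\D$ a non-source vertex $x$ of weight at least $2$ must have its two incident edges in one of the reducibility patterns. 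Take the oriented cycle $x\to y\to z\to u\to v\to x$ with $\omega(x)=2$ and all other weights equal to $1$, so that $I(\D)=(xy,\,yz,\,zu,\,uv,\,vx^{2})$. Using Lemma \ref{unmixedness} one checks that no vertex cover of size at least $4$ is strong (for instance, for $C=\{x,y,z,u\}$ the vertex $z$ lies in $L_3(C)$ but has no in-neighbour of weight at least $2$ in $L_2(C)\cup L_3(C)$), so $\D$ is unmixed; yet $x$ is a non-source of weight $2$ with $(x,y)\in E(\D)$ and $(y,x)\notin E(\D)$, so $x$ satisfies neither pattern (a) (it is not a sink) nor pattern (b) (its weight is not $1$). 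The reducible vertex in this example is $y$, not $x$.

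The paper avoids this trap by organizing the analysis around sinks rather than around heavy vertices. If $\D$ has a sink $x$, then $x$ automatically satisfies pattern (a), and if $x$ fails to be reducible it can only be because $\C\setminus x$ is not unmixed; Corollary \ref{corP4} then pins down a heavy vertex $z$ with $(u,z),(z,y)\in E(\D)$, and a strong-vertex-cover computation on $\{v,x,y,z\}$ forces $\omega(y)=1$, making the \emph{neighbour} $y$ reducible of type (b). If $\D$ has no sink it is an oriented cycle, and a counting argument on weight-one endpoints (together with the convention that an edge with both endpoints of weight $1$ is bidirectional) shows a sink must exist after all. The essential point your sketch misses is that the reducible vertex need not be the vertex carrying the large weight; it can be one of its neighbours.
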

\begin{proof}  In order to prove the lemma it suffices to show that $(2) \Longrightarrow (3)$ and $(3)\Longrightarrow (1)$.

$(2)\Longrightarrow (3)$. Assume that $\D$ has a sink vertex, say $x$. If $x$ is reducible, the it is a desired vertex. Assume that $x$ is not reducible. Without loss of general, we may assume that $(u,z)\in E(\D)$. Then, $\omega(z)\ne 1$ and $(z,y)\in E(\D)$. Observe that the vertex cover $\{v,x,y,z\}$ will be strong whenever $\omega(y)\ne 1$, and hence $\omega(y)=1$. In particular, $y$ is reducible.

Now we consider the case $\D$ has no sink vertex, i.e. $\D$ is an oriented cycle. We may assume that the orientation is $x\to y\to z\to u \to v\to x$. If $\D$ has an edge such that the weight of each endpoint is bigger than $1$. In this case, $\{z,y,x,v\}$ is a strong vertex cover of $\D$, a contradiction. Therefore, each edge of $\D$ has at least one endpoint with weight $1$. It implies that $\D$ has at least $3$ vertices of weight $1$. Consequently, $\D$ has an edge, say $(x,y)$, such that $\omega(x)=\omega(y)=1$. Consequently, $(y,x)\in E(\D)$ by our convention, and so $x$ is a sink vertex, a contradiction. Thus, we conclude that $\D$ has a reducible vertex.

$(3) \Longrightarrow (1)$. Let $x$ be a reducible vertex of $\D$. By symmetry, we consider two following cases:

\medskip

{\it Case $1$}: $(x,y),(x,v)\in E(\D)$. We may assume that $(u,z)\in E(\D)$. Let $m=\omega(x)$ and $n=\omega(z)$. Since $\D \setminus x$ is Cohen-Macaulay by Corollary \ref{corP4}, so is $I(\D)+(x^m) = I(\D\setminus x)+(x^m)$. On the other hand, $I(\D) \colon x^m  = (y,v,uz^n)$ is Cohen-Macaulay. By Lemma \ref{CM-Q} we conclude that $\D$ is Cohen-Macaulay. 

\medskip

{\it Case $2$}: $\omega(x)=1$ and $(y,x), (x,v), (u,v)\in E(\D)$. By the argument as in the previous case,  $I(\D) +(x)$ is Cohen-Macaulay. Observe that $I(\D)\colon x = (y,v^p,f)$ where $p=\omega(v)$ and $f = m_{\D}(uz)$.

It follows that $I(\D)\colon x$ is Cohen-Macaulay, and so $I(\D)$ is Cohen-Macaulay by Lemma \ref{CM-Q}, as required.
\end{proof}

\begin{lem}\label{CM-BV1} If $\D$ and $\D\setminus x$ are unmixed, then
$\D$ has a reducible vertex not adjacent to $x$ (maybe just $x$). 
\end{lem}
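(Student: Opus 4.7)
The plan is to extract a reducible vertex from Lemma \ref{CM-C5} and, when that vertex happens to be adjacent to $x$, to transfer reducibility across the cycle using the extra hypothesis that $\D \setminus x$ is unmixed. Lemma \ref{CM-C5} supplies some reducible vertex $w$ of $\D$; if $w \in \{x, z, u\}$ the conclusion holds, so by the reflective symmetry $y \leftrightarrow v$ of the cycle about $x$ we may assume $w = y$. Unwinding the definition, $y$ is reducible in one of three shapes: $(a)$ $y$ is a sink, i.e.\ $(x,y),(z,y) \in E(\D)$; $(b_1)$ $\omega(y)=1$ with $(x,y),(y,z),(u,z) \in E(\D)$, making $z$ a sink; or $(b_2)$ $\omega(y)=1$ with $(z,y),(y,x),(v,x) \in E(\D)$, making $x$ a sink.

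Two of the three cases are quick. In $(b_2)$, $x$ is a sink and $\D \setminus x$ is unmixed by hypothesis, so $x$ is itself reducible. In $(b_1)$, $z$ is a sink with $(y,z),(u,z) \in E(\D)$, and the only remaining requirement for $z$ to be reducible is that $\D \setminus z$ is unmixed; I would verify this by applying Corollary \ref{corP4} to the path $u-v-x-y$, reading the conditions on the middle edge $vx$ off the orientations forced by $(b_1)$ together with the unmixedness of $\D \setminus y$ (which is free from $y$ being reducible).

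The substantive case is $(a)$. If $x$ is also a sink we are done as in $(b_2)$, so by the symmetry of the cycle we may assume $(y,x) \notin E(\D)$, forcing $(v,x) \in E(\D)$; combined with $(x,y),(z,y) \in E(\D)$ from $y$ being a sink, this gives the three edges $(v,x),(x,y),(z,y) \in E(\D)$ required by condition $(b_2)$ for $x$. So if $\omega(x)=1$ then $x$ is reducible via $(b_2)$, and it suffices to handle $\omega(x) \geqslant 2$. Here condition $(b)$ at $x$ is killed by the weight, so the reducible vertex must come from $\{z, u\}$. I would feed the strong-cover criterion (Lemma \ref{unmixedness}) applied to the size-$4$ covers $V \setminus \{v\}$ and $V \setminus \{u\}$ of $\D$, together with the conditions Corollary \ref{corP4} imposes on the path $y-z-u-v$ (from $\D \setminus x$ unmixed), into an analysis that pins down $\omega(u)$ and the orientations of $zu$ and $uv$ well enough to verify condition $(a)$ for $u$ and the unmixedness of $\D \setminus u$; here the convention that an edge with both endpoint-weights equal to $1$ admits both orientations is exactly what supplies the second in-arrow at $u$.

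The hard part of the argument is precisely this last sub-case: combining the cover-strongness constraints on $\D$ with the Corollary \ref{corP4} conditions on $\D \setminus x$ to force $\omega(u) = 1$ and the correct orientations of $zu$ and $uv$, thereby ruling out every residual weight/orientation configuration in which none of $x, z, u$ would be reducible.
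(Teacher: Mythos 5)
Your overall strategy---pull a reducible vertex out of Lemma \ref{CM-C5} and, when it lands on a neighbour of $x$, transfer reducibility across the cycle---is genuinely different from the paper's. The paper argues by contradiction: assuming none of $x$, $z$, $u$ is reducible, it splits on the orientation of the two edges at $x$, reads off weight and orientation constraints, and exhibits a strong vertex cover with four vertices, contradicting Lemma \ref{unmixedness}. Your reduction to $w=y$ and your case $(b_2)$ are correct.

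However, case $(b_1)$ has a genuine gap, and the step does not just lack justification---it fails. For $z$ to be reducible you need $\C\setminus z$ unmixed, i.e.\ (Corollary \ref{corP4} on the path $y-x-v-u$, middle edge $xv$) $\deg_x(m_{\D}(xv))\leqslant \deg_x(m_{\D}(xy))$ and $\deg_v(m_{\D}(xv))\leqslant \deg_v(m_{\D}(uv))$. In $(b_1)$ one has $(x,y)\in E(\D)$, so $\deg_x(m_{\D}(xy))=1$ and the first inequality forces $\omega(x)=1$ or $(x,v)\in E(\D)$; nothing in $(b_1)$, in the unmixedness of $\D\setminus y$ (which constrains only the middle edge $uv$ of $z-u-v-x$), or in the unmixedness of $\D\setminus x$ (which constrains only the middle edge $zu$ of $y-z-u-v$) controls this. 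Concretely, take $\omega(x)=\omega(z)=\omega(u)=2$, $\omega(y)=\omega(v)=1$ and arrows $x\to y\to z\leftarrow u\to v\to x$, so $I(\D)=(xy,\,yz^2,\,uz^2,\,uv,\,vx^2)$. Then $I(\D)=(x,z^2,u)\cap(x,z^2,v)\cap(y,u,v)\cap(y,u,x^2)\cap(y,z^2,v)$ is unmixed, $I(\D\setminus x)=(yz^2,uz^2,uv)$ is unmixed, and $y$ is reducible of your type $(b_1)$; yet $x$ is not reducible ($\omega(x)=2$ and $(y,x)\notin E(\D)$), $u$ is a source of weight $2$, and $z$, although a sink, has $I(\C\setminus z)=(xy,vx^2,uv)$ non-unmixed since $\deg_x(vx^2)=2>1=\deg_x(xy)$. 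So in this configuration no vertex of $\{x,z,u\}$ is reducible and your case $(b_1)$ cannot be closed. (This example is worth checking against the statement of the lemma itself and against the paper's Case~1, whose deduction of $\omega(y)\ne 1$ from the non-reducibility of $z$ and $u$ tacitly assumes that non-reducibility can only come from the arrow/weight conditions, not from $\C\setminus z$ or $\C\setminus u$ failing to be unmixed.) Finally, your case $(a)$ with $\omega(x)\geqslant 2$ is only a programme: the promised analysis pinning down $\omega(u)$ and the orientations of $zu$ and $uv$ is exactly the content of the lemma and is not carried out, so the proposal is incomplete even apart from $(b_1)$.
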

\begin{proof} Assume that $x$ is not reducible so that we must show that either $z$ or $u$ is reducible. Since $x$ is not reducible, we need to consider two following possible cases:

\medskip

{\it Case $1$}: $(v,x), (x,y) \in E(\D)$. Assume on the contrary that both $u$ and $z$ are not reducible. Then, we would have $\omega(x)\ne 1, \omega(y)\ne 1$ and $(y,z)\in E(\D)$. But in this case, $C=\{x,y,z,u\}$ is a strong vertex cover of $\D$, a contradiction. Thus, either $z$ or $u$ is reducible.

\medskip

{\it Case $2$}: $(x,y), (x,v)\in E(\D)$. Assume on the contrary that both $u$ and $z$ are not reducible. Then, we would have $\omega(y)\ne 1, \omega(v)\ne 1$ and $(y,z), (v,u)\in E(\D)$. But in this case, $\{y,z,u,v\}$ is a strong vertex cover of $\D$, a contradiction. Thus, either $z$ or $u$ is reducible, and the lemma follows.
\end{proof}

\begin{defn} The vertex $x$ is called a reducible vertex of the second kind on $\D$, if $(x,v), (u,v), (x,y),(z,y)\in E(\D)$.
\end{defn}

\begin{lem}\label{CM-BV2} If $\D$, $\D\setminus x$ and $\D\setminus u$ are unmixed, then either $x$ or $u$ is a reducible vertex or a reducible vertex of the second kind.
\end{lem}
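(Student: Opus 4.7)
The plan is to apply Lemma \ref{CM-BV1} twice in order to reduce to the case where both $y$ and $z$ are reducible vertices of $\D$, and then carry out a case analysis driven by the three branches in the definition of a reducible vertex. Applying Lemma \ref{CM-BV1} to the pair $(\D,\D\setminus x)$, both unmixed by hypothesis, produces a reducible vertex of $\D$ not adjacent to $x$ (possibly $x$ itself), hence lying in $\{x,z,u\}$; if this vertex is $x$ or $u$ we are done, so we may assume $z$ is reducible. Similarly, applying the lemma to $(\D,\D\setminus u)$ we may assume $y$ is also reducible.

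Two configurations now finish the proof immediately. If $z$ is reducible through the first sub-branch of condition~(2) of the definition, namely $\omega(z)=1$ together with $(y,z),(z,u),(v,u)\in E(\D)$, then $(v,u),(z,u)\in E(\D)$ satisfy condition~(1) of reducibility at $u$; since $\D\setminus u$ is unmixed by hypothesis, $u$ is reducible. Symmetrically, if $y$ is reducible through the second sub-branch of condition~(2), namely $\omega(y)=1$ with $(z,y),(y,x),(v,x)\in E(\D)$, then $(y,x),(v,x)\in E(\D)$ give condition~(1) at $x$, so $x$ is reducible.

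There remain four combinations, in which $y$ is reducible either by $(x,y),(z,y)\in E(\D)$ or by $\omega(y)=1$ together with $(x,y),(y,z),(u,z)\in E(\D)$, and $z$ is reducible either by $(y,z),(u,z)\in E(\D)$ or by $\omega(z)=1$ together with $(u,z),(z,y),(x,y)\in E(\D)$. In each such combination the edges $(x,y),(u,z)\in E(\D)$ are forced, together with at least one orientation of $yz$. One now examines the four possible orientations of the two remaining edges $uv$ and $vx$. If $(x,v),(u,v)\in E(\D)$, then combined with the forced edges either $x$ or $u$ meets the definition of a reducible vertex of the second kind. For any other orientation, the unmixedness of $\D\setminus y$ and $\D\setminus z$ (a consequence of $y$ and $z$ being reducible) together with Corollary \ref{corP4} applied to the length-$3$ paths $z\text{-}u\text{-}v\text{-}x$ and $y\text{-}x\text{-}v\text{-}u$ yields degree inequalities that force weight identities of the form $\omega(v)=1$ with $\omega(x)=1$ or $\omega(u)=1$; the Remark's bidirectional-edge convention then admits the missing orientation of $xv$ or $uv$ automatically, either collapsing back to the second-kind case or directly supplying the edges that verify condition~(1) or the second sub-branch of~(2) for the reducibility of $x$ or $u$.

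The main obstacle is the case bookkeeping rather than any conceptually hard step: the four remaining reducibility-type combinations each split into four subsubcases according to the orientation of $\{uv,vx\}$, and in each of them the bidirectional-edge convention from the Remark must be threaded carefully through the weight constraints produced by Corollary \ref{corP4}. Once this is done systematically, every configuration either exhibits $x$ or $u$ as a reducible vertex or yields a reducible vertex of the second kind among $\{x,u\}$, completing the proof.
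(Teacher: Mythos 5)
Your proposal is correct and follows essentially the same route as the paper: both arguments apply Lemma \ref{CM-BV1} twice to reduce to the situation where $y$ and $z$ are both reducible, and then pin down the orientations and weights of the remaining edges using Corollary \ref{corP4} (via the unmixedness of $\C\setminus y$ and $\C\setminus z$) together with the bidirectional-edge convention. The only difference is organizational: the paper runs a short contradiction argument that determines the orientations of $vx$, $vu$ and $xy$ one at a time from the assumed non-reducibility of $x$ and $u$, whereas you enumerate all branch-combinations directly, which checks out but is longer.
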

\begin{proof} By symmetry, we may assume that $(y,z)\in E(\D)$. Now assume on the contrary that $x$ and $u$ both are neither reducible nor reducible in the second kind. Together with Lemma \ref{CM-BV1}, this fact implies that $y$ and $z$ both are reducible.

Assume that $(v,x)\in E(\D)$. Then, $(x,y) \in E(\D)$ as $x$ is not reducible. Since $z$ is reducible, we have $\omega(x)=1$. It follows that $x$ is reducible, a contradiction. Hence, $(v,x)\notin E(\D)$, and so $(x,v)\in E(\D)$.

Assume that $(v,u)\in E(\D)$. Then, $(u,z) \in E(\D)$ as $u$ is not reducible. It follows that $\omega(u)=\omega(z)=1$ since $I(\D\setminus y)$ and $I(\D\setminus x)$ are unmixed, respectively. It follows that $(z,u)\in E(\D)$, and so $u$ is reducible, a contradiction. Hence, $(u,v)\in E(\D)$.

So far we have $(u,v), (x,v), (z,y)\in E(\D)$. Next if $(y,x)\in E(\D)$, then $\omega(x)=1$ as $I(\D\setminus u)$ is unmixed. But in this case, $x$ is reducible, a contradiction.  It implies that $(x,y)\in E(\D)$. Therefore, $x$ is a reducible vertex of the second kind, a contradiction.

In summary, the our hypothesis is not true, so that either $x$ or $z$ is a reducible vertex or a reducible vertex of the second kind, as required.
\end{proof}

\section{Cohen-Macaulay weighted oriented graphs}

This section is devoted to classify Cohen-Macaulay weighted oriented graphs $\D$ whose underlying graph $G$ has girth at least $5$.  If $\D$ is Cohen-Macaulay, then $G$ is in the class $\pc$. Thus, it is natural to study the weight $\omega$ on pendant edges and basic $5$-cycles of $G$.

The following lemma allows us to investigate the weight on pendant edges. 

\begin{lem} \label{CM-L01} Let $I$ be a proper unmixed monomial ideal of $R$ such that every generator of $I$ has the form $u^a w^b$ where $u,w$ are variables (maybe $u=w$) and $a,b\in \N$. Let $f$ be a monomial such that $f\notin I$. Assume that $x^my^p$ and $x^nz^q$ are among  minimal generators of $I \colon f$ where $x,y,z$ are distinct variables and $m,n,p,q \in \Z_{>0}$.  Assume that $x^k \notin I \colon f$ for every $k$.  If $y$ does not appear in any minimal generator of $I\colon f$ except $x^my^p$, then $m\geqslant n$.
\end{lem}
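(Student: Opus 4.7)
My plan is to argue by contradiction. Suppose $m<n$; I will produce a prime ideal containing $J:=I\colon f$ whose height is strictly smaller than $\hgt(J)$, contradicting the unmixedness of $J$.

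Two preliminary observations underpin the argument. First, since $\ass(R/J)\subseteq \ass(R/I)$, the ideal $J$ is itself a proper unmixed monomial ideal of height $h:=\hgt(I)$; in particular, every $P\in\ass(R/J)$ is a monomial prime generated by exactly $h$ variables. Second, for each associated (hence minimal) prime $P$ of the monomial ideal $J$, the primary component $Q_P$ is generated by the ``$P$-parts'' of the minimal generators of $J$, where the $P$-part of a monomial is obtained by discarding all variable-factors that do not lie in $P$. Thus for a bi-monomial generator $u^aw^b$ of $J$ the $P$-part is $u^aw^b$ when $u,w\in P$, it is $u^a$ when $u\in P$ and $w\notin P$, and so on.

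Using the minimality of $x^nz^q$, I single out the right prime. Since $x^{n-1}z^q\notin J$ there exists $P\in\ass(R/J)$ with $x^{n-1}z^q\notin Q_P$. First, I claim $x\in P$: otherwise, since $x^nz^q\in J\subseteq P$ forces $z\in P$, the $P$-part of $x^nz^q$ is the pure power $z^q$, yielding $z^q\in Q_P$ and hence $x^{n-1}z^q\in Q_P$, a contradiction. Second -- and this is where $m<n$ is used -- I claim $y\in P$: otherwise, the $P$-part of $x^my^p$ is $x^m$, so $x^m\in Q_P$; since $m\leqslant n-1$, $x^m$ divides $x^{n-1}$, placing $x^{n-1}$ and hence $x^{n-1}z^q$ in $Q_P$, again a contradiction.

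Having forced $x,y\in P$, let $P'$ be the monomial prime generated by the variables of $P$ other than $y$; it has height $h-1$. The generators $x^my^p$ and $x^nz^q$ both lie in $(x)\subseteq P'$. By hypothesis every other minimal generator $g$ of $J$ avoids $y$; since $g\in P$, some variable of $P$ divides $g$, and because $y\nmid g$ that variable must lie in $P\setminus\{y\}$, whence $g\in P'$. Therefore $J\subseteq P'$, which forces $\hgt(P')\geqslant \hgt(\sqrt{J})=h$, contradicting $\hgt(P')=h-1$. Hence $m\geqslant n$. The main content of the argument is in identifying the correct associated prime from which to delete $y$: the prime witnessing the minimality of the ``other'' generator $x^nz^q$ is precisely the one forced to contain $y$ under the assumption $m<n$, and the uniqueness of $x^my^p$ as the minimal generator involving $y$ is exactly what permits removing $y$ from $P$ without destroying the containment $J\subseteq P$.
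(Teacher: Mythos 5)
Your proof is correct, but it takes a genuinely different route from the paper's. The paper leans on the hypothesis that every generator of $I$ involves at most two variables: it writes $\sqrt{I\colon f}=(x_1,\dots,x_t)+I(H)$ for a graph $H$ in which $y$ is a leaf attached to $x$, concludes that every associated prime of $I\colon f$ contains exactly one of $x$ and $y$, splits the primary decomposition accordingly as $I\colon f=I_1\cap I_2$ (primes containing $x$ versus primes containing $y$), and then runs an $\lcm$ argument: $x^m$ must be a minimal generator of $I_1$, while the generator $x^nz^q$ forces some minimal generator of $I_1$ to be divisible by $x^n$, which is impossible when $n>m$. You instead work with a single well-chosen associated prime --- the one whose primary component misses $x^{n-1}z^q$ --- show that under $m<n$ it must contain both $x$ and $y$, and then delete $y$ to manufacture a prime of height $h-1$ containing $I\colon f$, contradicting unmixedness. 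Your two supporting facts ($\ass(R/(I\colon f))\subseteq\ass(R/I)$, and that the primary component at a minimal prime of a monomial ideal is generated by the $P$-parts of the generators) are standard and correctly applied. A notable byproduct: your argument never invokes the hypotheses that the generators of $I$ are supported on at most two variables or that no pure power of $x$ lies in $I\colon f$ --- both of which the paper needs to set up its graph-theoretic dichotomy --- so you have in fact proved a slightly more general statement. Both proofs use the uniqueness of $x^my^p$ as the generator involving $y$ in the same essential way: the paper to make $y$ a leaf of $H$, you to remove $y$ from $P$ without destroying the containment $I\colon f\subseteq P'$.
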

\begin{proof} Now assume on the contrary that $m < n$.  Let
$$I \colon f = Q_1\cap Q_2\cap \cdots\cap Q_s$$
be an irredundant primary decomposition of $I$ with $\sqrt{Q_i} = P_i$ for $i=1,\ldots,s$.  Since $I$ is unmixed, so is $I\colon f$, and hence
$\sqrt{I\colon f} = P_1\cap P_2\cap \cdots\cap P_s$  is the irredundant primary decomposition of $\sqrt{I\colon f}$. Observe that $\sqrt{I\colon f}$ is generated by variables and square-free quadratic monomials, thus we may assume that
$$\sqrt{I\colon f} = (x_1,\ldots,x_t) + I(H)$$
where $H$ is a simple graph with $V(H) \subseteq V \setminus \{x_1,\ldots,x_t\}$.  In particular,  every minimal associated prime of $\sqrt{I\colon f}$ has the form $(x_1,\ldots,x_t) + (v\mid v\in C)$ where $C$ is a minimal vertex cover of $H$.  By the assumption,  $xy$ is an edge of $H$ with $\deg_H(y)=1$. Hence,  every maximal independent set of $H$ must contain either $x$ or $y$ but not both. Thus, every associated prime of $I\colon f$ contains either $x$ or $y$ but not both. 

Assume that $x\in P_j$ for $j=1,\ldots,r$ and $y\in P_j$ for $j = r+1,\ldots,s$ where $1\leqslant r < s$. Write $I = I_1\cap I_2$ where
$$I_1 = Q_1\cap \cdots\cap Q_r \text{ and } I_2 = Q_{r+1}\cap \cdots\cap Q_s.$$
Then, $y$ does not appear in any generator of $I_1$ and $x$ does not appear in any generator of $I_2$. Since $x^my^p$ is a generator of $I$, we deduce that $x^m$ is a generator of $I_1$.

Now because $x^nz^q$ is a generator of $I\colon f$, then $x^nz^q = \lcm(g,h)$ where $g$ is a generator of $I_1$ and $h$ is a generator of $I_2$. Since $x\nmid h$, it follows that $x^n \mid g$. Because $x^m$ is a generator of $I_1$, it follows that $m\geqslant n$, and lemma follows.
\end{proof}

\medskip

We now move on to investigate the weight on subgraph of $\D$ induced on a basic $5$-cycle of $G$. In the following lemmas, assume that $\D$ is unmixed and the underlying graph $G$ is in the class $\pc$. Let $C$ be a $5$-basic cycle of $G$ with $E(C) = \{xy,yz,zu,uv,vx\}$ (see e.g. Figure \ref{ABSet}) and let $\C$ be the induced oriented subgraph of $\D$ on $C$. 

\medskip

We start with a combinatorial fact on the graph $G$. Here, for a subset $S$ of $G$, we define the closed neighborhood of $S$ in $G$ by
$N[S] = S \cup \{u \mid uv\in E(G) \text{ for some } v \in E(G)\setminus S\}$. If $S=\{v_1,\ldots,v_k\}$, we write $N[v_1,\ldots,v_k]$ to mean $N[S]$.

\begin{lem}\label{CM-L02}  Assume that $\deg_G(x)>2$ and $N(x) = \{y,v,x_1,\ldots,x_m\}$.  Then, 
\begin{enumerate}
\item there is an independent set of $G$ with $m$ vertices,  say $\{a_1,\ldots,a_m\}$,  such that
\begin{enumerate}
\item $G[x_1,\ldots,x_m,a_1,\ldots,a_m]$ consists of $k$ disjoint edges $x_1a_1, \ldots, x_m a_m$.
\item $N[a_1,\ldots,a_m] \cap V(C) = \emptyset$.
\end{enumerate}
\item Assume further that $\deg_G(z) > 2$ and $N(z) = \{y,u,z_1,\ldots,z_n\}$. Then, there is an independent set of $G$ with $n$ vertices,  say $\{b_1,\ldots,b_n\}$,  such that
\begin{enumerate}
\item $G[z_1,\ldots,z_n,b_1,\ldots,b_n]$ consists of $n$ disjoint edges $z_1b_1, \ldots, z_n b_n$.
\item $N[b_1,\ldots,b_n] \cap V(C) = \emptyset$.
\item $\{a_1,\ldots,a_m,b_1,\ldots,b_n\}$ is an independent set of $G$.
\end{enumerate}
\end{enumerate}
\end{lem}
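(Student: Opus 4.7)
The plan is to construct $\{a_1,\ldots,a_m\}$ and $\{b_1,\ldots,b_n\}$ by matching each off-$C$ neighbour of $x$ (respectively $z$) with a partner determined by its $\pc$-class: a leaf partner when the neighbour lies in $P(G)$, or a degree-$2$ vertex on its own basic $5$-cycle when it lies in $C(G)$. The verification is then a case analysis using girth $\geq 5$ and the basicness of the $5$-cycles involved.

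Before constructing, I would record three preliminary observations. Since $G$ has girth $\geq 5$ the $5$-cycle $C$ is induced, so $x_i\notin V(C)$ for every $i$. Basicness of $C$ together with $\deg_G(x)>2$ forces $\deg_G(y)=\deg_G(v)=2$. Finally, reading the $\pc$-partition $V(G)=P(G)\sqcup C(G)$ as a disjoint union and using $x\in V(C)\subseteq C(G)$ gives that $x$ is not incident to any pendant edge, so none of $x_1,\ldots,x_m$ is a leaf. Each $x_i$ is therefore either (i) in $P(G)$, in which case it has a unique leaf partner $a_i$ under the perfect matching of $G[P(G)]$, or (ii) in $C(G)$, in which case it lies on some basic $5$-cycle $C_i$ whose two $x_i$-neighbours have degree $2$ by basicness (as $\deg_G(x_i)\geq 3$); I then take $a_i$ to be one of these.

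For part $(1)$, property $(a)$ amounts to checking $x_ix_j$, $a_ix_j$, $a_ia_j \notin E(G)$ for $i\neq j$. The first fails by a triangle argument: $x_ix_j\in E(G)$ would give the triangle $xx_ix_j$, contradicting girth. The second uses that a leaf $a_i$ has unique neighbour $x_i$, while a cycle-vertex $a_i\in V(C_i)$ adjacent to $x_j$ would create a short cycle through $x$, ruled out by girth or basicness. For property $(b)$, since $a_i$ and its neighbours lie in $\{x_i\}\cup V(C_i)$, it suffices to note $x_i\notin V(C)$ and $V(C_i)\cap V(C)=\emptyset$ — two basic $5$-cycles in a graph of girth $\geq 5$ cannot share an edge (else a shared-edge endpoint has degree $\geq 3$, violating basicness), and in our configuration a shared single vertex would force a $4$-cycle through $x$. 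Independence of $\{a_i\}$ is handled case by case: two leaves have distinct unique neighbours; a leaf and a cycle-vertex cannot be adjacent; and for two cycle-vertices $a_i\in V(C_i)$, $a_j\in V(C_j)$ the cycles are distinct (else $x\text{-}x_i\text{-}w\text{-}x_j\text{-}x$ is a $4$-cycle, where $w$ is a shared $C_i$-neighbour), and a suitable choice of $a_i,a_j$ avoids the at most one vertex where $C_i$ and $C_j$ may coincide.

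Part $(2)$ is symmetric. The new checks are that $\{x_i\}\cap\{z_j\}=\emptyset$ — a common neighbour $w$ of $x$ and $z$ would close the $4$-cycle $x\text{-}w\text{-}z\text{-}y\text{-}x$, contradicting girth — and that $a_ib_j\notin E(G)$. The delicate subcase is when $x_i$ and $z_j$ lie on a common basic $5$-cycle $C^{*}$; basicness rules out $x_iz_j\in E(G)$ (both have degree $\geq 3$), so they are at distance $2$ on $C^{*}$, and among the four candidate pairs $(a_i,b_j)$ drawn from the pairs of $C^{*}$-neighbours of $x_i$ and $z_j$, inspection shows that at least two pairs consist of distinct, non-adjacent vertices. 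The main obstacle is precisely this subcase, since the choices of $a_i$ and $b_j$ cannot be made independently on a shared basic $5$-cycle; every other configuration reduces cleanly to girth considerations or the $\pc$-partition.
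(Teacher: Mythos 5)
Your construction and verification follow the paper's proof essentially verbatim: off-cycle neighbours in $P(G)$ get their matched leaves, those in $C(G)$ get a degree-$2$ neighbour on their basic $5$-cycle, and all adjacency checks reduce to the absence of $3$- and $4$-cycles. The only real deviations are harmless: in the shared-cycle subcase of $(2)(c)$ you select a working pair among the four candidates where the paper instead fixes $b_j$ as the common neighbour of $x_i$ and $z_j$ and takes $a_i$ adjacent to another degree-$2$ vertex (both work), and your justification that a basic $5$-cycle through $x_i$ meets $C$ in no vertex is slightly off (a single shared vertex need only force a $5$-cycle, not a $4$-cycle) --- but that disjointness is part of the $\pc$ structure (the basic $5$-cycles partition $C(G)$), which the paper likewise invokes without proof.
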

\begin{proof} $(1)$ We construct recursively the set $\{a_1,\ldots,a_m\}$ as follows.  Assume that we have collected $a_1,\ldots,a_{j-1}$ for $j < k$.  Since $G$ is in the class $\pc$,  we have $x_j$ is either lying in a pendant edge or a $5$-basic cycle.  In the former case,  we choose $a_j$ to be the leaf of this pendant edge.  In the second case,  there is a $5$-basic cycle,  say $C_j$,  containing $x_j$.  Then,  we choose $a_j$ to be a neighbor of $x_j$ and adjacent to another vertex of degree $2$ in this cycle (see Figure \ref{ABSet}). Since $G$ has no cycle of length $4$,  $a_j \ne a_i$ for all $i < j$. Because, if it is not the case then $G$ has a $4$-cycle $x\to x_i\to a_i\to x_j\to x$, a contradiction. By the same argument, $a_j$ is not adjacent to any $x_i$ for $i < j$.

Now since $C_j$ and $C$ are disjoint and $N[a_j] \subseteq V(C_j)$, we have $N[a_j] \cap V(C) = \emptyset$, and hence $N[a_1,\ldots,a_j] \cap V(C)=\emptyset$.

\begin{figure}[h]
\includegraphics[scale=0.5]{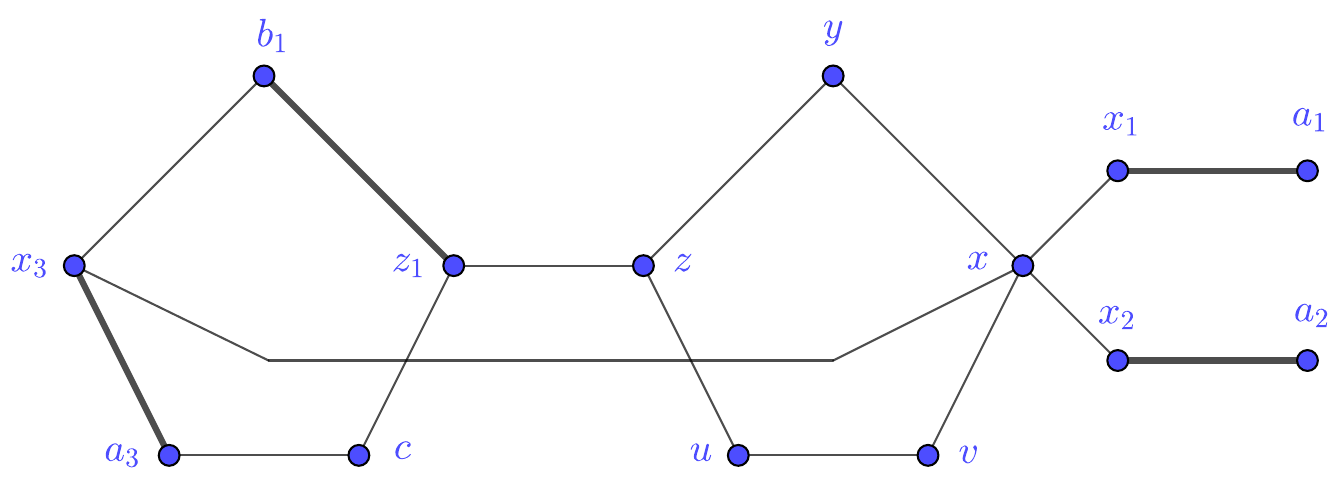}\\
\medskip
\caption{The vertices $a_i$ and $b_j$.}
\label{ABSet}
\end{figure}

Now we verify that $a_j$ is not adjacent to $a_i$ for any $i < j$.  Indeed,  assume on the contrary that $a_j$ is adjacent to $a_i$ for some $i < j$.  Then,  $a_j$ and $a_i$ are not leaves,  hence by the construction,  they are  in the basic $5$-cycles $C_j$ and $C_i$,  respectively.  Since $a_j$ and $a_i$ are adjacent to each other,  $C_i =C_j$. In this case, $x_i$ and $x_j$  are vertices of $C_j$ as well. Let $w$ be the vertex of $C_j$ which is adjacent to both $x_i$ and $x_j$.  Then,  $G$ has a $4$-cycle $x \to x_i\to w \to x_j\to x$, a contradiction.  Thus,  $a_j$ is not adjacent to any vertex $a_i$ with $i < j$. Recall that $a_j$ is not adjacent to any $x_i$ for $i < j$, so $G[x_1,\ldots,x_j, a_1,\ldots,a_j]$ consists of $j$ disjoint edges $x_1a_1,\ldots, x_j a_j$.

By using this process,  we finally obtain the desired set $\{a_1,\ldots,a_k\}$.

$(2)$ We proceed to choose $b_j$ as follows. If $z_j$ is lying in a pendant edge, then $b_j$ is the leaf of this pendant edge. In the other case, $z_j$ must be in a basic $5$-cycle, say $C'$. If $C'$ does not contain any vertex in $\{x_1,\ldots,x_m\}$, then choose $b_j$ as in the previous case. If $C'$ contains some vertex in this set, say $x_i$, then we choose $b_j$ to the the vertex in $C'$ adjacent to both $x_i$ and $z_j$ (see Figure \ref{ABSet}). 

By the same argument as in the previous case, we get the properties $(a)$ and $(b)$. In order to prove the property $(c)$, assume on the contrary that $a_i$ and $b_j$ are adjacent to each other for some $i$ and $j$. In particular, neither of them is a leaf, so $x_i$ and $y_j$ are lying on the same $5$-basic cycle $C_i$. By the choice of $a_i$ and $b_j$, we have $N_G(b_j) = \{x_i,z_j\}$ and $N_G(a_i) \ne \{x_i,z_j\}$. Therefore, $a_i$ and $b_j$ are not adjacent to each other, a contradiction, and the property $(c)$ follows. The proof of the lemma is complete.
\end{proof}

\begin{lem} \label{CM-L03} $\C$ is unmixed.
\end{lem}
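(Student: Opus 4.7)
My plan is to argue by contradiction, leveraging a combinatorial fact about minimal vertex covers of $G$. Suppose $\mathcal{C}$ is not unmixed. By Lemma \ref{unmixedness}, there exists a strong vertex cover $C'$ of $\mathcal{C}$ with $L_3(C')\neq\emptyset$; equivalently, $C'$ is not a minimal vertex cover of $C$. Since minimal vertex covers of the $5$-cycle have size $3$, $|C'|\in\{4,5\}$, and by the cyclic symmetry of $C$ I may assume $C'=V(C)$ or $C'=V(C)\setminus\{z\}$.

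The key observation is that every minimal vertex cover $M$ of $G$ satisfies $|M\cap V(C)|=3$. The lower bound is immediate since $M\cap V(C)$ covers $C$; for the upper bound, if $|M\cap V(C)|\ge 4$ then $V(C)\setminus M$ has at most one vertex, so there is a $C$-adjacent pair $w_1,w_2\in V(C)\cap M$ with $N_C(w_i)\subseteq M$; by minimality each $w_i$ must have a private edge, which must then go outside $V(C)$, forcing $\deg_G(w_i)>2$---contradicting the basic property of $C$. Granted this, it suffices to construct a strong vertex cover $C''$ of $\mathcal{D}$ with $C'\subseteq C''$: since $\mathcal{D}$ is unmixed, Lemma \ref{unmixedness} forces $C''$ to be minimal, so $|C''\cap V(C)|=3$, whereas $C''\cap V(C)\supseteq C'$ has size $\geq 4$---the desired contradiction.

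I would take $C''=C'\cup T$ with $T\subseteq V(G)\setminus V(C)$ chosen so that (i) $T\supseteq N_G(V(C)\setminus C')\setminus V(C)$, which covers the edges going from $V(C)\setminus C'$ to the outside; (ii) $T$ covers the remaining edges of $G[V(G)\setminus V(C)]$; and (iii) $T$ contains $N_G(y)\setminus V(C)$ for every witness vertex $y\in V(C)$ appearing in the strong-cover condition for $C'$ with $\deg_G(y)>2$, so that the inherited witness for $y$ remains valid in $\mathcal{D}$. Lemma \ref{CM-L02} provides disjoint ``buffer'' vertices $a_i,b_j$ in $V(G)\setminus V(C)$ attached to each outer neighbor of a high-degree vertex of $V(C)$, which is exactly the combinatorial flexibility needed to realize (i)--(iii) simultaneously. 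The main obstacle is verifying that $C''$ is actually strong for $\mathcal{D}$: inherited witnesses handle vertices of $L_3(C'',\mathcal{D})\cap V(C)$, but one must also control which vertices of $T$ end up in $L_3(C'',\mathcal{D})$; the option at each outer neighbor of $V(C)$ of placing either the neighbor itself or its buffer into $T$ is precisely what lets us arrange $L_3(C'',\mathcal{D})\setminus V(C)$ to contain only vertices whose strong-witness follows at once from the surrounding pendant or basic $5$-cycle structure.
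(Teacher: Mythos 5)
Your overall strategy is the paper's: assume $\C$ has a strong vertex cover $C'$ with $L_3(C')\neq\emptyset$, use Lemma \ref{CM-L02} to produce buffer vertices outside $V(C)$, and extend $C'$ to a strong vertex cover of $\D$ to contradict unmixedness. Your ending is a legitimate variant (a strong cover of an unmixed $\D$ has $L_3=\emptyset$, hence is minimal, hence meets $V(C)$ in exactly $3$ vertices, and your ``key observation'' is correctly proved), whereas the paper contradicts Lemma \ref{unmixedness} directly by showing the extended cover still has nonempty $L_3$. Either ending is fine.

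The genuine gap is in the construction of $T$, which is the heart of the proof and which you leave as a wish list (i)--(iii) plus an acknowledged ``main obstacle.'' Your conditions pull $N_G(y)\setminus V(C)$ into $T$ for each witness $y$, and then you must cover the edges leaving those new vertices, decide at each outer neighbor whether to take it or its buffer, and so on; each choice can create fresh vertices of $L_3(C'',\D)$ deep inside other pendant edges and basic $5$-cycles, and ``the strong-witness follows at once from the surrounding structure'' is exactly the claim that needs proof --- there is no argument that the cascade terminates with every new $L_3$-vertex witnessed. The paper's resolution is a single global choice that makes the cascade impossible: extend the independent set $(V(C)\setminus C')\cup\{a_1,\dots,a_m\}\cup\{b_1,\dots,b_n\}$ to a \emph{maximal} independent set $S$ and put $C''=(V(G)\setminus S)\cup C'$. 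Since $V(G)\setminus S$ is a minimal vertex cover, every vertex of $C''$ outside $V(C)$ already has a neighbor in $S$; one checks that this neighbor cannot lie in $C'$ (the only candidates would force a $4$-cycle through $x$ or $z$, and the buffers $a_i,b_j$ stay in $S$), so $L_3(C'',\D)=L_3(C',\C)$ and $L_2(C',\C)\subseteq L_2(C'',\D)$, and the inherited witnesses suffice with no further bookkeeping. Without this (or an equivalent explicit recipe for $T$ together with a verification that $L_3(C'',\D)\subseteq L_3(C',\C)$), your proof is not complete.
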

\begin{proof} If $\D=\C$, there is nothing to show. If not then, assume on the contrary that $\C$ is not unmixed so that it has a strong vertex cover, say $F$, with $L_3(F)\ne\emptyset$. Let $S_2= L_2(F)$ and $S_3 = L_3(F)$ with respect to the oriented graph $\C$. We  have two possible cases:

{\it Case $1$}: $\C$ has just one vertex of degree at least $3$ and assume that $x$ is such a vertex. Let $N(x) = \{y,v,x_1,\ldots,x_m\}$. By Lemma \ref{CM-L02}, there is an independent set of $G$ with $m$ vertices, say $\{a_1,\ldots,a_m\}$, such that 
$N[a_1,\ldots,a_m]\cap V(\C)=\emptyset$ and $G[x_1,\ldots,x_m,a_1,\ldots,a_m]$ consists of disjoint edges $x_1a_1,\ldots,x_ma_m$. Since $(V(\C)\setminus F) \cup \{a_1,\ldots,a_m\}$ is an independent set of $G$, hence there is a maximal independent set of $G$, say $S$, contains it. In particular, $U = V(G)\setminus S$ is a minimal vertex cover of $G$ and $U\cap V(\C) \subseteq F$. Let $U' = U \cup F$ so that $U'$ is also a vertex cover of $G$.  We now prove two following claims: 

{\it Claim $1$}: $L_3(U') = S_3$. For the inclusion $L_3(U') \subseteq S_3$, let $a\in L_3(U')$. If $a\notin F$, then $a\in U\setminus V(\C)$ since $U\cap V(\C) \subseteq F$. Note that $L_3(U)=\emptyset$ by \cite[Proposition 6]{PRT}, so $a$ is adjacent to some vertex, say $b$, of $S$. On the other hand, $b\in N_G(a)\subseteq U'$, so  $b\in F$ because $b\notin U$. It follows that $b=x$ since $a\notin V(\C)$. In particular, $a = x_j$ for some $j$, and hence  $a$ is adjacent to $a_j\in S\setminus V(\C)$. This implies $N_G(a)\not \subseteq U'$, a contradiction. Thus, $a\in F$.

With this assumption, if  $a\ne x$, then $a\in S_3$ since $N_G(a) = N_{\C}(a)\subseteq F$. If $a = x$, since $x_1,\ldots,x_m\in U$, we have
$N_G(a)=N_G(x) = N_{\C}(x) \cup \{x_1,\ldots,x_m\} \subseteq F \cup U = U'$. Hence, $a\in S_3$, and hence  $L_3(U') \subseteq S_3$ holds. 

For the reverse inclusion $S_3\subseteq L_3(U')$, let $a\in S_3$. If $a\ne x$, then $N_G(a) = N_{\C}(a) \subseteq F$, and then $a\in L_3(U')$. If $a = x$, then $N_{G}(a) =N_{\C}(x) \cup \{x_1,\ldots,x_m\} \subseteq U'$ since $N_{\C}(x)\subseteq F$ and $x_1,\ldots,x_m\in U$. Hence, $a\in L_3(U')$, and the claim follows.

{\it Claim $2$}: $S_2\subseteq L_2(U')$. Indeed, if  $S_2=\emptyset$, there is nothing to show. If not then, let $a\in S_2$  and $b\in V(\C)\setminus F$ such that $(b,a)\in E(\C)$. Since $F$ has at least $4$ vertices, it forces $F = V(\C)\setminus\{b\}$. Together with the fact that $x_1,\ldots,x_m\in U'$, it implies that $(b,a)$ is the unique directed edge of $\D$ which is incident to the vertex $a$ and one vertex in $V(\D) \setminus U'$,  so $a\in L_3(U')$, and the inclusion follows.
\medskip

Since $F$ is a strong vertex cover of $\C$, two claims above yield  $U'$ is a strong vertex cover of $\D$. Since $L_3(U') = S_3\ne \emptyset$, $\D$ is not unmixed by Lemma \ref{unmixedness}, a contradiction. 

\medskip

{\it Case $2$}: $\C$ has two vertices of degree at least $3$ and assume that $\deg_G(x) > 2$ and $\deg_G(z) > 2$. Let $N_G(x) = \{y,v,x_1,\ldots,x_m\}$ and $N_G(z) = \{u,v,z_1,\ldots,z_n\}$. By Lemma \ref{CM-L02}, we have an independent sets of $G$, say $S'=\{a_1,\ldots,a_m, b_1,\ldots,b_n\}$, such that $G[S'] \cap V(\C)=\emptyset$ and $G[S', x_1,\ldots,x_m,z_1,\ldots,z_n]$ consists of disjoint edges $x_1a_1,\ldots,x_ma_m$, $z_1b_1,\ldots,z_nb_n$.

Now since $(V(\C)\setminus F)\cup S'$ is an independent set of $G$, we  extend it to a maximal independent set of $G$, say $S$. Then, $U = V(G)\setminus S$ is a minimal vertex cover of $G$. Now let $U' = U \cup F$ so that $U'$ is a vertex cover of $\D$. By the same argument as in the previous case, we can verify that $L_3(U') = S_3$ and $S_2\subseteq L_2(U')$. It follows that $U'$ is a strong vertex cover of $\D$, a contradiction.

In summary, $L_3(C') = \emptyset$, and the lemma follows from Lemma \ref{unmixedness}.
\end{proof}

\begin{lem} \label{CM-L04} If $\omega(x) \ne 1$ and $(w,x)\in E(\D)$ for some $w\in N_G(x)\setminus\{y,v\}$, then $(y,x), (v,x)\in E(\D)$.
\end{lem}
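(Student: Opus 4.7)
I argue by contradiction using Lemma \ref{unmixedness}. By the $y \leftrightarrow v$ symmetry on $C$, assume $(y, x) \notin E(\D)$; then the orientation convention together with $\omega(x) \neq 1$ forces $(x, y) \in E(\D)$. My goal is to exhibit a strong vertex cover $F$ of $\D$ with $y \in L_3(F)$, contradicting the unmixedness of $\D$.

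Write $N_G(x) \setminus \{y, v\} = \{w, x_1, \ldots, x_m\}$ and apply Lemma \ref{CM-L02}(1) to $x$ to obtain paired vertices $a_w, a_1, \ldots, a_m$. Let $F := V(G) \setminus S$, where $S$ is the independent set containing: the cycle vertex $u$; the distinguished in-neighbor $w$ itself (rather than its pair $a_w$); each $a_i$; and, whenever one of $w, x_1, \ldots, x_m$ lies on a basic $5$-cycle, a second vertex of that cycle at distance $2$, completing a maximum independent set of the cycle. Further extend $S$ by independent dominating choices in every remaining basic $5$-cycle and pendant edge of $G$, so that every vertex of $G$ other than $y$ is in $S$ or has a neighbor in $S$. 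The independence of $S$ and the feasibility of the construction follow from the $\pc$ structure of $G$ and the girth hypothesis, as in Lemma \ref{CM-L02}; substituting $w$ for $a_w$ is legitimate since by Lemma \ref{CM-L02}(1a) the vertex $w$ is not adjacent to any other $a_i$. The construction automatically gives $\{x, y, z, v\} \cap S = \emptyset$ (for instance $v, z$ are excluded since $u \in S$, and $x$ is excluded since $w \in S$), and no $\D$-out-neighbor of $x$ lies in $S$ (the out-neighbor $y$ is excluded by choice; any out-neighbor among $v, x_1, \ldots, x_m$ is excluded since $u, a_i \in S$).

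By construction $F$ is a vertex cover of $G$, and $N_G(y) = \{x, z\} \subseteq F$ gives $y \in L_3(F)$; every other vertex of $F$ has a neighbor in $S$ (explicitly: $x$ via $w$, $z$ and $v$ via $u$, $x_i$ via $a_i$, interiors of the remaining cycles and pendants by the further dominating elements), so $L_3(F) = \{y\}$. To see that $F$ is strong, I show that $x$ witnesses $y$: the in-edge $(w, x) \in E(\D)$ with $w \in S$ gives $x$ an in-neighbor outside $F$, so $x \in L_2(F) \cup L_3(F)$; $x \notin L_3(F)$ since $w \in N_G(x) \cap S$; and $x \notin L_1(F)$ because every $\D$-out-neighbor of $x$ lies in $\{y, v, x_1, \ldots, x_m\}$ (the edge $xw$ cannot be oriented $x \to w$ since $\omega(x) \neq 1$), and all of these are in $F$. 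Therefore $x \in L_2(F)$, and with $\omega(x) \geq 2$ and $(x, y) \in E(\D)$ the vertex $x$ is a valid witness for $y$. Hence $F$ is a strong vertex cover of $\D$ with $L_3(F) \neq \emptyset$, a contradiction.

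\textbf{Main obstacle.} The main technical difficulty is the construction of $S$: assembling in one independent set all of the constraints (contain $u$ and $w$; avoid $\{x, y, z, v\}$ and every $\D$-out-neighbor of $x$; dominate $V(G) \setminus \{y\}$) requires a global use of the $\pc$ structure and the girth $\geq 5$ hypothesis---the latter guaranteeing that the basic $5$-cycles attached at $x$ are pairwise vertex-disjoint, since a shared vertex would produce a cycle of length $\leq 4$. This is an elaboration of the independent-set construction in Lemma \ref{CM-L02}, with the single key modification that $w$ (not $a_w$) is placed in $S$; this substitution is precisely what couples the hypothesis $(w, x) \in E(\D)$ to the conclusion $x \in L_2(F)$.
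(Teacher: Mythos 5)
Your proof is correct and follows essentially the same route as the paper's: argue by contradiction, use Lemma \ref{CM-L02} to seed an independent set, and build a strong vertex cover whose unique $L_3$-vertex is the cycle-neighbor toward which $x$ points, witnessed by $x\in L_2$ via the in-edge $(w,x)$ and $\omega(x)\geqslant 2$. The only cosmetic differences are that the paper works with $v$ instead of $y$ and obtains the cover as a minimal vertex cover plus one vertex, whereas you build the (non-minimal) cover directly as the complement of an independent set dominating $V(G)\setminus\{y\}$ --- your somewhat elaborate construction of $S$ can be streamlined to ``extend the seed to a maximal independent set of $G\setminus y$.''
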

\begin{proof} Let $N(x) = \{y,v,x_1,\ldots,x_m\}, \text{ where } m \geqslant 1$, with $(x_1,x)\in E(D)$. Now assume on the contrary that $(y,x) \notin E(\D)$ or $(v,x)\notin E(\D)$. We may assume that $(v,x)\notin E(\D)$ so that $(x,v)\in E(\D)$ (see Figure \ref{InCycle}). 

\begin{figure}[h]
\includegraphics[scale=0.45]{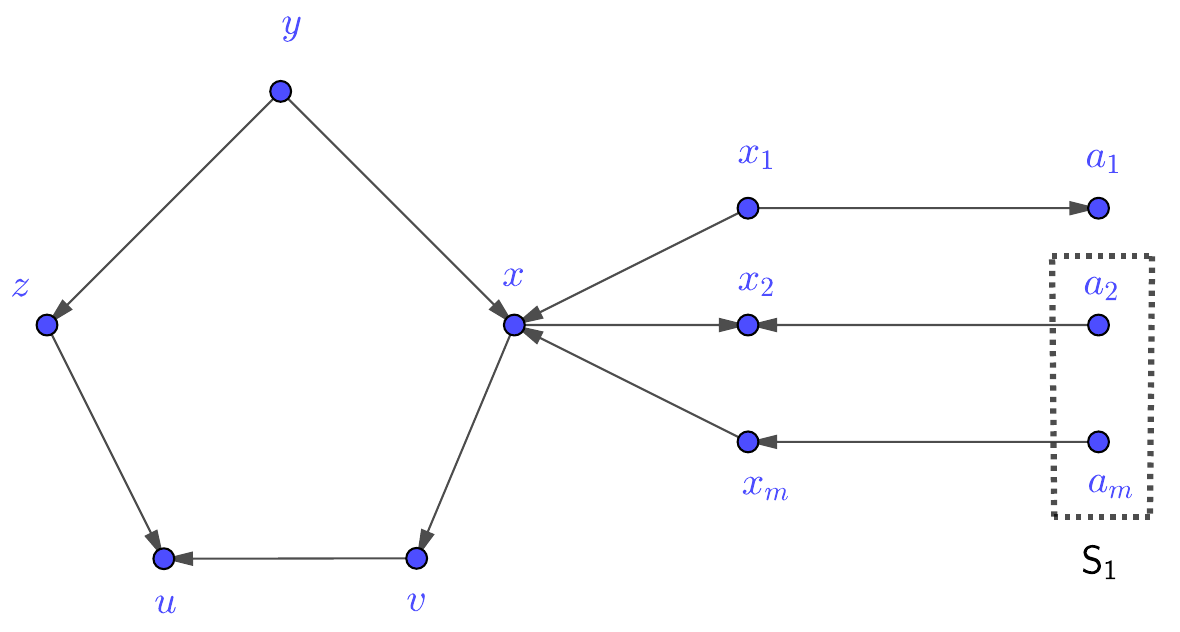}\\
\medskip
\caption{The structure of $G$.}
\label{InCycle}
\end{figure}

By Lemma \ref{CM-L02},  there is an independent set $\{a_1, \ldots,a_m\}$ of $G$ such that the graph $G[x_1,\ldots,x_m,a_1,\ldots,a_m]$ consists of disjoint edges $x_1a_1,\ldots, x_ma_m$ and $N[a_1,\ldots,a_m] \cap V(C) = \emptyset$. Let $S_1 = \{a_2,\ldots,a_m\}$.  As $\girth(G) \geqslant 5$, we deduce that $\{x_1, z, v\} \cup S_1$ is an independent set of $G$.  Now extend this set to a maximal independent set of $G$,  say $S$.  Then, $C = V(G) \setminus S$ is a minimal cover of $G$ so that $|C| = \hgt(I(\D))$. Note that $x,y,u,x_2, \ldots,x_m \in C$ and $x_1, z,v \notin C$.   

Let $C' = C\cup \{v\}$. Then, we have $L_3(C') = \{v\}$. Observe that $N(x) \setminus C' = \{x_1\}$ and  $(x_1,x)\in E(\D)$, so that $x\in L_2(C')$. Together with $\omega(x)\ne 1$ and $(x,v)\in E(\D)$, this fact yields $C'$ is a strong vertex cover of $\D$. In particular, $\D$ is not unmixed by Lemma \ref{unmixedness}, a contradiction. Hence, $(v,x)\in E(\D)$. By the same way, $(y,x)\in E(\D)$,  and the lemma follows.
\end{proof}

\begin{lem}\label{CM-L05} If $\deg_G(x) > 2$, then $\C\setminus x$ is unmixed.
\end{lem}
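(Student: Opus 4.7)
The plan is to argue by contradiction: supposing $\C\setminus x$ is not unmixed, Lemma \ref{unmixedness} supplies a strong vertex cover $F$ of $\C\setminus x$ with $L_3(F)\neq\emptyset$ (computed inside $\C\setminus x$), and I will extend $F$ to a strong vertex cover $U'$ of $\D$ with $L_3(U')\neq\emptyset$, contradicting the unmixedness of $\D$ via the same lemma.

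The extension closely parallels the construction in the proof of Lemma \ref{CM-L03}. Since $C$ is basic and $\deg_G(x)>2$, we have $\deg_G(y)=\deg_G(v)=2$ (as $y,v$ are adjacent to $x$ in $C$), and at most one of $z,u$ has degree greater than $2$ in $G$. Writing $N_G(x)=\{y,v,x_1,\ldots,x_m\}$ with $m\ge 1$, I would invoke Lemma \ref{CM-L02}(1) to produce an independent set $\{a_1,\ldots,a_m\}\subseteq V(G)\setminus N_G[V(C)]$ with $\{x_i a_i\}$ forming a matching, and, if $\deg_G(z)>2$ (respectively $\deg_G(u)>2$), apply Lemma \ref{CM-L02}(2) to obtain the analogous set $\{b_1,\ldots,b_n\}$. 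Form $T=(V(\C\setminus x)\setminus F)\cup\{a_i\}\cup\{b_j\}$, which is independent in $G$ since $V(\C\setminus x)\setminus F$ is independent in the induced path $\C\setminus x$ and the $a_i,b_j$'s are disjoint from $N_G[V(C)]$. Extend $T$ to a maximal independent set $S$ of $G$, let $U=V(G)\setminus S$ (a minimal vertex cover of $G$ satisfying $U\cap V(\C)\subseteq F\cup\{x\}$), and set $U'=U\cup F\cup\{x\}$, a vertex cover of $G$.

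The remaining steps are three verifications analogous to those in Lemma \ref{CM-L03}. First, $L_3(F)\subseteq L_3(U')$: a vertex $a\in L_3(F)$ has all its $\C\setminus x$-neighbors in $F\subseteq U'$, its (possible) $\C$-neighbor $x$ in $U'$ when $a\in\{y,v\}$ (using $\deg_G(a)=2$), and any extra neighbor $z_i$ or $u_i$ in $U$ via the $b_j$ trick. Second, $L_2(F)\subseteq L_2(U')$ by the same mechanisms: the in-witness from $V(\C\setminus x)\setminus F\subseteq S$ stays outside $U'$, and no outgoing edge of a vertex of $L_2(F)$ escapes $U'$. Third, strong covering of $U'$ on $L_3(F)$ transfers directly from that of $F$ on $\C\setminus x$. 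The main obstacle is handling any vertex in $L_3(U')\setminus L_3(F)$, which can only be $x$ itself and arises precisely when $y,v\in F$: in that case I would either refine the choice of $T$ by substituting some $x_i$ for its $a_i$ (keeping $T$ independent by $\girth(G)\ge 5$, which prevents $x_i$ from being adjacent to other vertices in $T$) so that $x_i\in S$ and consequently $x\notin L_3(U')$, or extract the needed in-witness for $x$ from the edge orientations at $y$ or $v$ forced by the strong cover condition on $F$. Either way, $U'$ is a strong vertex cover of $\D$ with $L_3(U')\neq\emptyset$, yielding the desired contradiction with Lemma \ref{unmixedness} applied to the unmixed $\D$.
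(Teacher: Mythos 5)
Your overall strategy is genuinely different from the paper's: you transplant the strong-vertex-cover transfer argument of Lemma \ref{CM-L03} to the path $\C\setminus x$, whereas the paper never touches strong vertex covers here. Instead it picks a neighbor $w\in N_G(x)\setminus\{y,v\}$ and passes to the colon ideal $I(\D)\colon w^{k}$ (or $I(\D)\colon w$), in which $x$ becomes a generator and the three edge monomials $f_1,f_2,f_3$ of the path $y\,z\,u\,v$ survive as minimal generators; two applications of Lemma \ref{CM-L01} then give $\deg_z(f_2)\leqslant\deg_z(f_1)$ and $\deg_u(f_2)\leqslant\deg_u(f_3)$, which is exactly the unmixedness criterion of Corollary \ref{corP4}. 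That algebraic route deliberately sidesteps the combinatorial difficulty your argument runs into.

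The gap in your proposal is precisely the case you flag, and neither of your two fixes works as stated. The problematic situation is $y,v\in F$ (e.g. $F=\{y,z,v\}$ with $L_3(F)=\{y\}$, which forces $(z,y),(u,z)\in E(\D)$ and $\omega(z)\geqslant 2$); then $x\in L_3(U')$ and you must either expel $x$ from $L_3(U')$ or produce a strong in-witness for it. (You cannot instead drop $x$ from $U'$: then $y\notin L_3(U')$ and the contradiction evaporates.) Your first fix --- replacing $a_i$ by $x_i$ in $T$ --- can be blocked: in the construction of Lemma \ref{CM-L02}(2), when $x_i$ and $z_j$ lie on a common basic $5$-cycle the vertex $b_j$ is chosen adjacent to \emph{both} $x_i$ and $z_j$, so $\{x_i,b_j\}$ is not independent and $x_i$ cannot be swapped into $T$; this configuration actually occurs (it is the cycle $C_2$ in Figure \ref{orientedGraph}), and when $m\leqslant n$ every $x_i$ may be blocked this way. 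You also cannot drop such a $b_j$ in favour of $z_j$, since then $z_j\notin U'$ and $(z,z_j)\in E(\D)$ (forced by Lemma \ref{CM-L04}, because $\omega(z)\geqslant 2$ and $(y,z)\notin E(\D)$) puts $z\in L_1(U')$, destroying the witness for $y$. Your second fix is unsubstantiated: the strong-cover condition on $F$ only constrains orientations of the path edges $yz,zu,uv$ and says nothing about the edges $xy$, $xv$, so no in-edge into $x$ from a heavy vertex of $L_2(U')\cup L_3(U')$ is forced. The argument can in fact be repaired, but only by a further bespoke modification of the cover on the basic $5$-cycle through $x_i$ (putting $x_i$ and the degree-two vertex opposite it into $S$ and keeping $b_j,z_j$ in the cover), which is an additional nontrivial step you have not supplied; as written, the proof does not close.
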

\begin{proof} In order to prove the lemma, we consider three possible cases:

\medskip

{\it Case $1$}: $\omega(x)=1$. Let $w\in N_G(x)\setminus \{y,v\}$. If $(x,w)\in E(\D)$, then $xw^k\in I(\D)$ where $k = \omega(w)$. It follows that
$$I(\D) \colon w^k = (x, f_1,f_2,f_3, \ldots)$$
where $f_1,f_2,f_3$ are monomials corresponding to edges of $\D$ induced on edges $yz,zu,uv$ of $G$, respectively. Moreover, we can verify that
\begin{enumerate}
\item $f_1,f_2,f_3$ are among minimal generators of $I(\D) \colon w^k$;
\item all other generators of $I(\D)$ contain neither $y$ nor $v$;
\item any power of either $z$ or $u$ is not in $I(\D)\colon w^k$.
\end{enumerate}
Together with Lemma \ref{CM-L01}, we obtain
$$\deg_z(f_2)\leqslant \deg_z(f_1), \text{ and } \deg_u(f_2) \leqslant \deg_u(f_3).$$
Thus, $I(\C\setminus x)$ is unmixed by Corollary \ref{corP4}.

Similarly, in the case $(w,x)\in E(\D)$, we prove $I(\C\setminus x)$ is unmixed by considering the colon ideal $I(\D)\colon w$.
\medskip

{\it Case $2$}: $\omega(x) \ne 1$ and $(w,x)\in E(\D)$ for some $w\in N_G(x)\setminus\{y,v\}$. Then, $(y,x), (v,x)\in E(\D)$ by Lemma \ref{CM-L03}. The proof now is the same as in the previous case by considering the colon ideal $I(\D) \colon w$.

\medskip

{\it Case $3$}: $\omega(x)\ne 1$ and $(x,w)\in E(\D)$ for every $w\in N_G(x)\setminus\{y,v\}$. Let $w$ be such a vertex so that $xw^k\in I(\D)$ where $k=\omega(w)$. The proof is the same as in the previous case by considering the colon ideal $I(\D) \colon w^k$, and the lemma follows.
\end{proof}

We now in position to prove the main result of the paper.

\begin{thm}\label{main-theorem} Let $\D$ be a weighted oriented graph with underlying graph $G$ of girth at least $5$. Then, the following conditions are equivalent:
\begin{enumerate}
\item $\D$ is Cohen-Macaulay.
\item $G$ is Cohen-Macaulay and $\D$ is unmixed.
\item $G$ is in the class $\pc$ and $\D$ satisfies:
\begin{enumerate}
\item For every pendant edge $xy$ of $G$ with $\deg_G(y)=1$ and $\omega(x) \ne 1$, $(y,x)\in E(\D)$ if $(z,x)\in E(\D)$ for some $z\in N(x)\setminus \{y\}$.
\item If $\C$ is a subgraph of $\D$ induced on a basic $5$-cycle of $G$, then
\begin{enumerate}
\item $\C$ is unmixed.
\item If $x$ is a vertex on $\C$ with $\deg_G(x) > 2$, then $\C\setminus x$ is unmixed.
\item If $x$ is a vertex on $\C$ such that $\omega(x)\ne 1$ and $(w,x)\in E(\D)$ for some $w\in V(\D)\setminus V(\C)$, then $(y,x), (v,x)\in E(\D)$, where $N_{\C}(x)=\{y,v\}$.
\end{enumerate}
\end{enumerate}
\end{enumerate}
\end{thm}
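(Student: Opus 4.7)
Prove the chain $(1) \Rightarrow (2) \Rightarrow (3) \Rightarrow (1)$.

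$(1) \Rightarrow (2)$ is immediate: Cohen-Macaulayness always implies unmixedness, and since $I(G)=\sqrt{I(\D)}$, the Cohen-Macaulayness of $R/I(\D)$ passes to $R/I(G)$ via \cite[Theorem 2.6]{HTT}.

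For $(2) \Rightarrow (3)$, Lemma \ref{HMT} forces $G \in \pc$. The sub-conditions (b)(i), (b)(ii), (b)(iii) are exactly Lemmas \ref{CM-L03}, \ref{CM-L05}, and \ref{CM-L04}, respectively. Condition (a) is proved by contradiction using Lemma \ref{CM-L01}: assume for a pendant edge $xy$ with $\deg_G(y)=1$, $\omega(x) \ne 1$, and $(z,x) \in E(\D)$ for some $z \in N(x) \setminus \{y\}$, that $(x,y) \in E(\D)$ instead of $(y,x) \in E(\D)$; the leaf property of $y$ forces $xy^{\omega(y)}$ to be the unique minimal generator of $I(\D)$ involving $y$, while $(z,x) \in E(\D)$ supplies the minimal generator $zx^{\omega(x)}$, so Lemma \ref{CM-L01} applied with $f=1$ yields $1 \geq \omega(x)$, contradicting $\omega(x) \ne 1$.

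For $(3) \Rightarrow (1)$ we induct on $|V(G)|$. The base case is when $G$ has no basic $5$-cycle: then $G \in \pc$ forces $V(G)=P(G)$ with pendant edges forming a perfect matching of $G$, so condition (a) matches the hypothesis of Lemma \ref{H2} and gives Cohen-Macaulayness of $\D$. Otherwise, fix a basic $5$-cycle $C$ of $G$; by (b)(i) the induced subgraph $\C$ of $\D$ on $C$ is unmixed, and Lemma \ref{CM-C5} produces a reducible vertex $x$ on $\C$. Setting $m=\omega(x)$, Lemma \ref{CM-Q}(2) reduces the Cohen-Macaulayness of $I(\D)$ to showing that $R/(I(\D),x^m)$ and $R/(I(\D) \colon x^m)$ are Cohen-Macaulay of the same dimension as $R/I(\D)$; the dimension equality follows from Lemma \ref{dim}, since $x$ is a shedding vertex of $G$ (which is vertex-decomposable by Lemma \ref{HMT}).

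The heart of the argument is to realize each of the ideals $(I(\D),x^m)$ and $I(\D)\colon x^m$, up to a Cohen-Macaulay Artinian or polynomial factor, as the edge ideal of a strictly smaller weighted oriented graph $\D_1$, respectively $\D_2$, whose underlying graph again has girth at least $5$ and lies in $\pc$. The crucial verification is that $\D_1$ and $\D_2$ each inherit condition (3): this requires a case analysis driven by the type of reducible vertex $x$ (the two cases in its definition, plus the second-kind case of Lemma \ref{CM-BV2}), together with the combinatorial lemmas \ref{CM-L02}, \ref{CM-BV1}, \ref{CM-BV2} used to track how basic $5$-cycles, pendant edges, and orientations transform in the smaller graphs. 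Once (3) is established for $\D_1$ and $\D_2$, the induction hypothesis supplies their Cohen-Macaulayness and closes the argument; this case analysis is the main obstacle.
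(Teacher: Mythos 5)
Your chain $(1)\Rightarrow(2)\Rightarrow(3)$ matches the paper: the first implication is the same two-line argument via \cite[Theorem 2.6]{HTT}, and your treatment of $(2)\Rightarrow(3)$ — in particular applying Lemma \ref{CM-L01} with $f=1$ to the generators $xy^{\omega(y)}$ and $zx^{\omega(x)}$, using that the leaf $y$ occurs in no other minimal generator to force $1\geqslant\omega(x)$ — is a correct and explicit instantiation of exactly what the paper does, with (i)--(iii) delegated to Lemmas \ref{CM-L03}, \ref{CM-L05}, \ref{CM-L04} as in the paper.

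The gap is in the inductive step of $(3)\Rightarrow(1)$. You claim that $(I(\D),x^m)$ and $I(\D)\colon x^m$ can each be realized, ``up to a Cohen-Macaulay Artinian or polynomial factor,'' as the edge ideal of a \emph{strictly smaller} oriented graph, and you induct on $|V(G)|$. Neither part survives scrutiny. For a reducible vertex $x$ of degree $\geqslant 3$ one has $(I(\D),x^m)=(x^m)+I(\G_1)$ where $\G_1$ retains the edges from $x$ to its neighbors outside the cycle; since the pure power $x^m$ sits alongside generators still involving $x$, this is not the edge ideal of any oriented graph and $x$ cannot be split off as an Artinian tensor factor. The paper's resolution is to attach a new whisker vertex $w$ at $x$ oriented as $(w,x)$, producing an auxiliary oriented graph $\H1$ with $I(\H1)\colon w=(x^m,I(\G_1))$; but $\H1$ has \emph{more} vertices than $G$ (it has one fewer basic $5$-cycle), so the induction must run on the number of basic $5$-cycles, not on $|V(G)|$ — your induction parameter does not decrease under the construction that actually works. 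A second, related omission: you take the reducible vertex supplied by Lemma \ref{CM-C5} at an arbitrary position on the cycle, but the auxiliary graphs lie in $\pc$ only if the reducible vertex is chosen so that its two cycle-neighbors have degree $2$ in $G$ (otherwise $zy$ and $uv$ do not become pendant edges). This is precisely why the paper needs Lemmas \ref{CM-BV1} and \ref{CM-BV2} — to relocate the reducible vertex (or pass to one of the second kind) at a vertex of degree $\geqslant 3$ — and not merely, as you suggest, to ``track how cycles and pendant edges transform.'' Without the whisker construction and the correct placement of the reducible vertex, the reduction you describe cannot be carried out.
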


\begin{proof} $(1)\Longrightarrow (2)$ Since $\D$ is Cohen-Macaulay, we have $\D$ is unmixed. Since $I(G)=\sqrt{I(\D)}$, so that $G$ is Cohen-Macaulay by \cite[Theorem 2.6]{HTT}.

$(2)\Longrightarrow (3)$ Since $G$ is a Cohen-Macaulay graph of girth at least $5$, $G$ is in the class $\pc$ by Lemma \ref{HMT}.  If  $G$ is just a $5$-cycle, there is nothing to show. If not then, we restate the property $(a)$ as:  for every pendant edge $xy$ of $G$ with $\deg_G(y)=1$, 
$m_{\D}(xy) \geqslant m_{\D}(xz)$ for any $xz\in E(G)$. In this form, the property $(a)$ follows from Lemma \ref{CM-L01}. The properties $(i)$-$(iii)$ follow from Lemmas \ref{CM-L03},  \ref{CM-L04} and \ref{CM-L05}.

$(3)\Longrightarrow (1)$ We prove by induction on the number of basic $5$-cycles of $G$.  If $G$ has no basic $5$-cycle, then its pendant edges form a perfect matching in $G$. In this case, together with the condition $(a)$, Lemma \ref{H2}  yields $\D$ is Cohen-Macaulay.

Assume that $G$ has some basic $5$-cycles. If $G$ is just a $5$-cycle, the desired implication follows from Lemma \ref{CM-C5}. Assume that $G$ is not a $5$-cycle. Let $C_1,\ldots,C_r$ be the basic $5$-cycles of $G$ with $r\geqslant 1$ and let $P$ be the set of pendant edges of $G$.  We may assume that $E(C_1) = \{xy,yz,zu,uv,vx\}$ and $N_G(x)=\{y,v,x_1,\ldots,x_k\}$ with $k\geqslant 1$. 

For example, consider $\D$ as illustrated in Figure \ref{orientedGraph}, the underlying graph $G$ has two basic $5$-cycles $C_1$ and $C_2$, where $C_1$ as above, and $C_2$ has the vertex set $V(C_2) = \{b_1,z_1,c,a_3,x_3\}$. Note also that $G$ has pendant edges $P = \{x_1a_1, x_2a_2\}$.

\begin{figure}[h]
\includegraphics[scale=0.5]{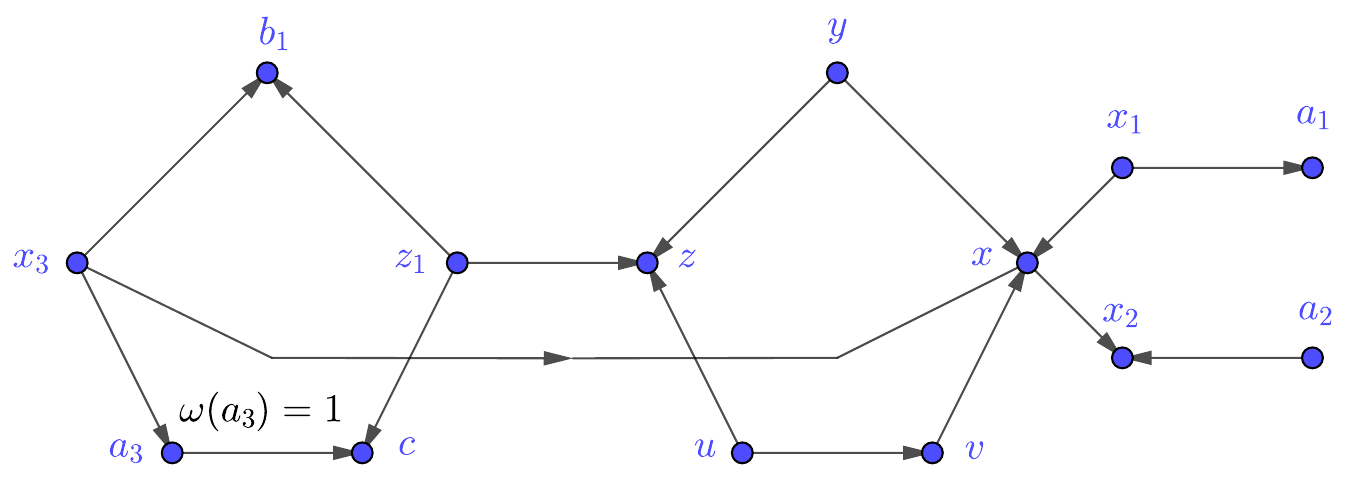}\\
\medskip
\caption{The oriented graph $G$.}
\label{orientedGraph}
\end{figure}

Now we come back to prove the theorem. Let $\C_1$ be the induced subgraph of $\D$ on $C_1$. For convenient, if $G_j$, with some index $j\in\Z_{>0}$, is a subgraph of $G$, then we use the symbol $\G_j$ to denote the induced subgraph of $\D$ on $G_j$.

\medskip

{\it Case $1$}: $C_1$ has two vertices of degree at least $3$. We may assume that $\deg_G(x) \geqslant 3$ and $\deg_G(z) \geqslant 3$. Since $\C_1$ is unmixed, by Lemma \ref{CM-BV2} we reduce to three possible subcases:
\medskip

{\it Subcase $1.1$}: $x$ is a reducible vertex on $\C_1$ and $(y,x),(v,x)\in E(\D)$ (see Figure \ref{orientedGraph}). Let $G_1$ be the graph obtained from $G$ by removing  two edges $yx$ and $vx$. Let $m=\omega(x)$ so that $yx^m,vx^m\in I(\D)$. Then,
$$I(\D) \colon x^m = (y, v) +I(\D\setminus\{y,v\})\colon x^m$$
and
$$I(\D)+( x^m) = (x^m, I(\G_1)).$$

We next prove these ideals are Cohen-Macaulay. Let $w$ be a new vertex and let $H$ be the graph with $V(H) =V(G_1)\cup\{w\}$ and $E(H)=E(G_1) \cup \{xw\}$. In other words,  $H$ is obtained from $G_1$ by adding the new edge $xw$. Then, $H$ is in the class $\pc$ with $5$-basic cycles $C_2,\ldots,C_r$ and pendant edges $P'=P\cup \{zy,uv,xw\}$ where $y,v,w$ are leaves. 

\begin{figure}[h]
\includegraphics[scale=0.5]{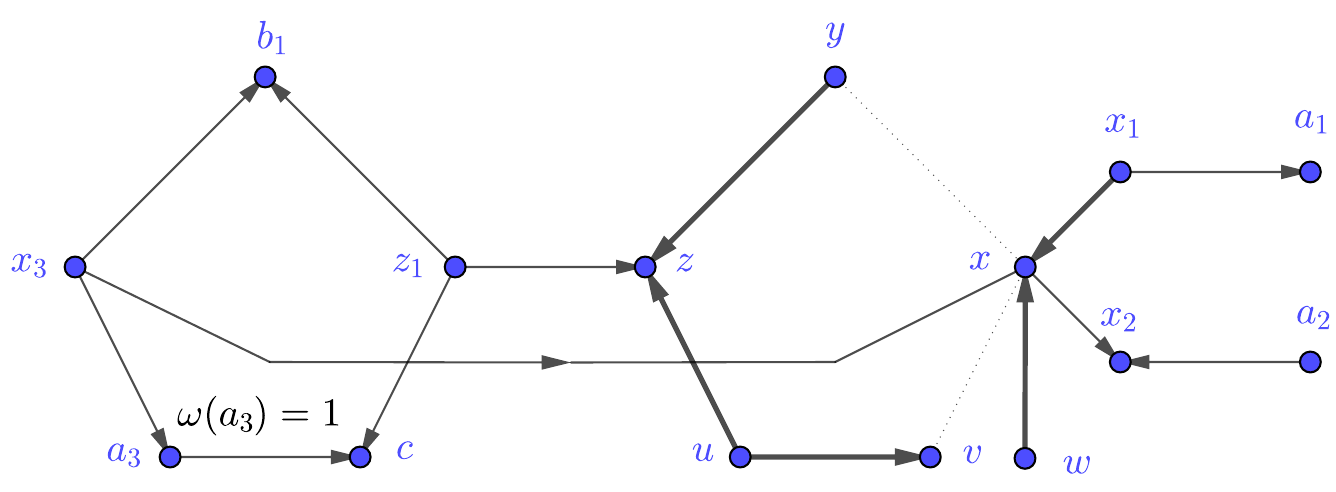}\\
\medskip
\caption{The oriented graph $\H1$.}
\label{Sub11}
\end{figure}

Let $\H1$ be the oriented graph whose underlying graph $H$ with the edge set $E(\H1) = \{(w,x)\} \cup E(\G_1)$ (see Figure \ref{Sub11}) and the weight inheriting from $\omega$, still denoted $\omega$, together with $\omega(w)=1$. We now verified the oriented graph $\H1$ satisfies the condition $(3)$. It suffices to prove the property $(a)$. In order to do this, by Lemma \ref{CM-L05}, it remains to verify this property for the pendant edge $zy$  when $\omega(z)\ne 1$.  Let $(z_1,z)\in E(\H1)$ for some $z_1\in V(H_1)\setminus \{y\}$. If $z_1=u$, then $(y,z)\in E(\H1)$ by Corollary \ref{corP4} since $\C_1\setminus x$ is unmixed. If $z_1 \ne u$, by applying Lemma \ref{CM-L04} we get $(y,z)\in E(\H1)$, and the property $(a)$ holds for $\H1$.

Since $H$ has $r-1$ basic $5$-cycles, $\H1$ is Cohen-Macaulay by the induction hypothesis. Since $(w,x)\in E(\H1)$, so that $wx^m \in I(\H1)$, by Lemma $\ref{CM-Q}$ we have $I(\H1) \colon w$ is Cohen-Macaulay. Note that $I(\H1) \colon w = (x^m, I(\G_1))$, so $I(\D)+(x^m)$ is Cohen-Macaulay.

We proceed to prove that $I(\D) \colon x^m$ is Cohen-Macaulay by the same manner. Let $G_2 = G\setminus \{y,v\}$ so that $\G_2 = \D\setminus \{u,v\}$. 

Let $w_2$ be a new vertex and let $H_2$ be the graph with $V(H_2) =V(G_2)\cup\{w_2\}$ and $E(H_2)=E(G_2) \cup \{xw_2\}$. In other words,  $H_2$ is obtained from $G_2$ by adding the new edge $xw_2$. Then, $H_2$ is in the class $\pc$ with $5$-basic cycles $C_2,\ldots,C_r$ and pendant edges $P''=P\cup \{zu, xw_2\}$ where $u,w_2$  are leaves. 

Let $\H1_2$ be the oriented graph with underlying graph $H_2$ by setting $E(\H1_2) = \{(w_2,x)\} \cup E(\G_2)$ and the weight inheriting from $\omega$, still denoted $\omega$, together with $\omega(w_2)=1$. We now verified the oriented graph $\H1_2$ satisfies the condition $(3)$. It suffices to prove the property $(a)$ for the pendant edge $zu$ when $\omega(z) \ne 1$. If there is $(z_1,z)\in E(\H1_2)$ for some $z_1\in E(\H1_2)\setminus\{u\}$. Since $z_1\in N_G(z)\setminus \{y,u\}$, by coming back the graph $\D$, we deduce that $(u,z)\in E(\D)$ by Lemma \ref{CM-L04}. Consequently, $(u,z)\in E(\H1_2)$, and the property $(a)$ holds for $\H1_2$.

Thus, $\H1_2$ is Cohen-Macaulay by the induction hypothesis. In particular, $I(\H1_2)\colon x^m$ is Cohen-Macaulay. Since $I(\H1_2)\colon x^m = (w_2) + I(\G_2) \colon x^m$,  we deduce that $I(\G_2) \colon x^m$ is Cohen-Macaulay too. Thus, $I(\D)\colon x^m = (y,v) + I(\G_2)\colon x^m$ is Cohen-Macaulay.

Since $\sqrt{I(\D)  +(x^m)} = (x, I(G\setminus x))$ is Cohen-Macaulay, so is $I(G\setminus x)$. Thus, $G\setminus x$ is well-covered.  As $x$ is not an isolated vertex,  it is a shedding vertex.   On the other hand,  $\sqrt{I(\D) \colon x^m} = (y,  v, I(G_x)) = I(G)\colon x$.  By Lemma \ref{dim}, we get $$\dim R/I(\D) = \dim R/I(\D)\colon x^m = \dim R/(I(\D),x^m).$$ 
It follows that $I(\D)$ is Cohen-Macaulay by Lemma \ref{CM-Q}.

The proofs of remaining cases are almost the same, so that we only sketch the proof.

\medskip

{\it Subcase $1.2$}: $x$ is a reducible vertex on $\C_1$ and $\omega(x)=1, (y,x), (x,v),(u,v) \in E(\D)$. We carry out the same argument as in the previous case by considering two ideals $I(\D)\colon x$ and $I(\D)+(x)$. The key point here is that $I(\D)\colon x = (y,v^n) + I(\D\setminus\{y,v\})\colon x$.

\medskip

{\it Subcase $1.3$}: $x$ is a reducible vertex of the second kind (e.g. the vertex $z_1$ on the cycle $C_2$ as in Figure \ref{orientedGraph} if we consider $C_2$ instead of $C_1$). Then,  $$(x,v),(x,y), (z,y),(u,v)\in E(\D).$$
The proof is carried out as in the subcase 1.1 by consider two ideals $I(\D) \colon x$ and $I(\D) + (x)$. The main point is that $I(\D)\colon x = (y^{\omega(y)},v^{\omega(v)})+I(\D\setminus\{y,v\})\colon x$ and $x$ is a source if  $\omega(x)\ne 1$ by the property $(iii)$. In this case, we assign the weight $1$ for $x$ before carrying out the proof.

{\it Case $2$}: $C_1$ has exactly one vertex of degree at least $3$ and assume that $x$ is such the vertex. The proof is carried out by the same manner as in the subcase 1.1 by consider two subcases. The first one is $x$ is a reducible vertex on $\C_1$. The second one is $\C_1$ has a reducible vertex not adjacent to $x$. We may assume $z$ is such a vertex. Then, the vertex $z$ now plays the role of $x$ in the proof of the subcase 1.1. The proof now is complete.
\end{proof}

\begin{exm}Let $\D$ be the oriented graph depicted in Figure \ref{orientedGraph}. In order to get a weight on this graph, assign any positive integer for each vertex except for the vertex $a_3$. Then, $\D$ is Cohen-Macaulay by Theorem \ref{main-theorem}.  Indeed, we need to show that the condition $(3)$ holds true. Actually, the conditions $(a)$ and $(iii)$ can verify directly. The conditions $(i)$ and $(ii)$ are not hard to check by using  Lemma \ref{CM-C5} and Corollary \ref{corP4}, respectively.
\end{exm}

We conclude the paper with a note on classifying Cohen-Macaulay oriented graphs.  In fact, Pitones, Reyes and Toled \cite{PRT} conjectured that (see \cite[Conjecture 53]{PRT}): $\D$ is Cohen-Macaulay if and only if the underlying graph $G$ is Cohen-Macaulay and $\D$ is unmixed. Unfortunately, this conjecture was disproved (see \cite[Section 4]{SSTY}). It is worth mentioning that the paper \cite{SSTY} is devoted to study the Cohen-Macaulay edge-weighted (undirected) graphs defined by Paulsen and Sather-Wagstaff  \cite{PS}. On the other direction, the characterization of sequentially Cohen-Macaulay edge-weighted graphs is studied in \cite{MDV}.

\medskip

Based on Theorem \ref{main-theorem} and our computations we propose the following conjecture. Recall that a graph of girth at least $4$, i.e. it contains no cycles of length $3$, is called a {\it triangle-free} graph.

\begin{conj} Let $\D$ be a weighted oriented graph whose underlying graph $G$ is triangle-free. Then, $\D$ is Cohen-Macaulay if and only if $G$ is Cohen-Macaulay and $\D$ is unmixed.
\end{conj}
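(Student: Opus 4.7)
The forward direction ($\D$ Cohen-Macaulay implies $G$ Cohen-Macaulay and $\D$ unmixed) is immediate from \cite[Theorem 2.6]{HTT} together with the fact that Cohen-Macaulayness implies unmixedness. For the backward direction, the plan is to mimic the structure of the proof of Theorem~\ref{main-theorem}, extended to accommodate the one extra structural piece that can appear when the girth drops to exactly $4$, namely basic $4$-cycles.

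The first step is to establish a structural classification of triangle-free Cohen-Macaulay graphs, playing the role of Lemma~\ref{HMT}. A natural candidate is that $V(G)$ partitions into vertices covered by a perfect matching of pendant edges, vertices of basic $5$-cycles, and vertices of \emph{basic $4$-cycles}, where a $4$-cycle of $G$ is basic when it contains no two adjacent vertices of degree $\geq 3$; the corresponding well-coveredness and vertex-decomposability statement (and the fact that every such $G$ is Cohen-Macaulay, and conversely) would need to be proved, as the triangle-free analogue of the combined results of \cite{FHN} and Lemma~\ref{HMT}.

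Next, with the structural description in hand, one analyses case by case the unmixedness conditions on $\D$ at each type of block. The pendant-edge condition (property (a) of Theorem~\ref{main-theorem}) is essentially independent of the girth and follows from Lemma~\ref{CM-L01}. The basic $5$-cycle conditions (i)--(iii) need to be reproved, but the only essential use of the girth $\geq 5$ hypothesis in Lemmas~\ref{CM-L02}--\ref{CM-L05} was to forbid $4$-cycles when building the auxiliary independent sets $\{a_j\}$ and $\{b_j\}$; in the triangle-free setting one must allow the possibility that the ``forbidden'' $4$-cycle actually occurs as a basic $4$-cycle of $G$, and adjust the construction so that the $a_j, b_j$ are chosen inside these basic blocks. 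For basic $4$-cycles one then introduces a direct classification of Cohen-Macaulay weighted oriented $4$-cycles (obtained from the colon-and-quotient technique of Lemma~\ref{CM-Q} on each vertex of $C_4$ in turn) together with a notion of reducible vertex on a basic $4$-cycle playing exactly the role of the reducible vertex from the $5$-cycle theory.

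The main proof then runs by induction on the total number of basic $4$- and basic $5$-cycles of $G$. The base case (no basic cycles of either length) is handled by Lemma~\ref{H2}. For the inductive step, one picks a basic cycle together with a reducible vertex $x$ on it, forms $(I(\D), x^m)$ and $I(\D) : x^m$ with $m=\omega(x)$, realises each as the edge ideal of a modified oriented graph with strictly fewer basic cycles via the ``add a pendant vertex $w$'' trick used in the proof of Theorem~\ref{main-theorem}, and applies Lemmas~\ref{CM-Q} and \ref{dim}. The main obstacle I expect is the case analysis for basic $4$-cycles that interact with other basic blocks: since a $4$-cycle can share an edge (not merely a vertex) with another part of the structure without creating a triangle, the number of local configurations grows substantially compared with the $5$-cycle case, and the critical technical challenge is to show that the unmixedness hypothesis on $\D$ always forces the existence of a reducible vertex on such a $4$-cycle whose removal via colon/quotient leaves an oriented graph still satisfying all of the recursive hypotheses.
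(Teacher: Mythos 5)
The statement you are proving is stated in the paper as an open \emph{conjecture}: the authors propose it on the basis of Theorem \ref{main-theorem} and computations, and give no proof. So there is no argument in the paper to compare yours against, and what you have written is in any case a research programme rather than a proof: the decisive steps (the structure theorem, the reducible-vertex theory for $4$-cycles, the case analysis for blocks sharing edges) are flagged by you as ``to be proved.''

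More importantly, the first and load-bearing step of your programme is false as stated. You propose that a triangle-free Cohen-Macaulay graph decomposes into a pendant perfect matching together with basic $4$- and $5$-cycles, in analogy with the class $\pc$ of Lemma \ref{HMT}. Consider the Cohen-Macaulay bipartite graph attached (via Herzog--Hibi) to the chain poset $1<2<3$: vertices $x_1,x_2,x_3,y_1,y_2,y_3$ and edges $x_iy_j$ for $i\leqslant j$. It is bipartite, hence triangle-free, and Cohen-Macaulay. Its only $4$-cycle is $x_1y_2x_2y_3$, which is not basic because the adjacent vertices $x_1$ and $y_3$ both have degree $3$; its pendant edges are $x_1y_1$ and $x_3y_3$ only. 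Hence $x_2$ and $y_2$ lie in no pendant edge and in no basic cycle, and the proposed partition of $V(G)$ does not exist. Any proof of the conjecture must subsume the full Herzog--Hibi classification of Cohen-Macaulay bipartite graphs (equivalently, arbitrary finite posets), a class far richer than ``pendant matching plus basic short cycles,'' so an induction on the number of basic cycles cannot carry the argument. The correct triangle-free analogue of the Finbow--Hartnell--Nowakowski structure theorem is itself a substantial open combinatorial problem, which is precisely why the paper leaves this statement as a conjecture.
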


\subsection*{Acknowledgment}  This work is  partially supported by NAFOSTED (Vietnam) under the grant number 101.04-2023.36.


\end{document}